\documentclass[american]{scrartcl}
\KOMAoptions{DIV=13, paper=a4, abstract=on, captions=tableheading}
\usepackage[utf8]{inputenc}
\usepackage[T1]{fontenc}
\usepackage{babel, csquotes, lmodern, microtype}
\usepackage{amsmath, amssymb, amsthm, bm, booktabs, pgfplots, siunitx, subcaption}
\usepackage{afterpage}

\sisetup{%
   scientific-notation=false,%
   retain-zero-exponent=true,%
   tight-spacing=true,%
   group-minimum-digits = 4,%
   group-digits = true,%
}

\usepackage{hyperref}
\hypersetup{%
   pdfauthor={W. Huang, L. Kamenski, J. Lang},
   pdftitle={Stability of explicit one-step methods
   for P1-finite element approximation of linear diffusion equations
   on anisotropic meshes},
   pdfsubject={Preprint},
}

\usepackage[capitalize]{cleveref}
\crefformat{section}{#2Sect.~#1#3}
\Crefformat{section}{#2Section~#1#3}
\crefmultiformat{section}{Sects.~#2#1#3}%
   { and~#2#1#3}{, #2#1#3}{ and~#2#1#3}
\crefformat{equation}{(#2#1#3)}
\Crefformat{equation}{Equation~(#2#1#3)}
\crefmultiformat{equation}{(#2#1#3)}%
   { and~(#2#1#3)}{, (#2#1#3)}{ and~(#2#1#3)}
\crefrangeformat{equation}{(#3#1#4) to~(#5#2#6)}
\crefformat{lemma1}{Lemma~#2#1#3}
\Crefformat{lemma1}{Lemma~#2#1#3}
\crefmultiformat{lemma1}{Lemmas~#2#1#3}%
   { and~#2#1#3}{, #2#1#3}{ and~#2#1#3}
\crefrangeformat{lemma1}{Lemmas~#3#1#4 to~#5#2#6}
\crefformat{corollary1}{Corollary~#2#1#3}
\Crefformat{corollary1}{Corollary~#2#1#3}
\crefmultiformat{corollary1}{Corollaries~#2#1#3}%
   { and~#2#1#3}{, #2#1#3}{ and~#2#1#3}
\crefrangeformat{corollary1}{Corollaries~#3#1#4 to~#5#2#6}

\makeatletter\g@addto@macro\@floatboxreset\centering\makeatother
\usetikzlibrary{external}
\usetikzlibrary{calc,shadings}
\pgfplotsset{%
   compat=newest,
   grid=major, ticks=major,
   every axis/.append style={font=\footnotesize},
}

\newcommand\D{\mathbb{D}}
\newcommand\M{\mathbb{M}}
\newcommand\cO{\mathcal{O}}
\newcommand\R{\mathbb{R}}
\newcommand\Th{\mathcal{T}_h}

\DeclareMathOperator{\Diag}{diag}
\providecommand{\diag}{\Diag}

\newcommand{\be}{\boldsymbol{e}}

\newcommand{\bS}{\boldsymbol{S}}
\newcommand{\bZ}{\boldsymbol{Z}}

\newcommand{\bU}{\boldsymbol{U}}
\newcommand{\bv}{\boldsymbol{v}}

\newcommand{\bx}{\boldsymbol{x}}
\newcommand{\bxi}{\boldsymbol{\xi}}

\newcommand{\MAM}{M^{-\frac{1}{2}} A M^{-\frac{1}{2}}}
\newcommand{\MA}{M^{-1} A}
\newcommand{\AMA}{A^{\frac{1}{2}} M^{-1} A^{\frac{1}{2}}}
\newcommand{\FDF}{{\left(F_K'\right)}^{-1} \D_K {\left(F_K'\right)}^{-T}}
\newcommand{\FF}{{\left(F_K'\right)}^{-1} {\left(F_K'\right)}^{-T}}

\newcommand{\FMkF}{{\left(F_K'\right)}^{-1} \M_K^{-1} {\left(F_K'\right)}^{-T}}
\newcommand{\detFMkF}{\det\left(\FMkF\right)}
\newcommand{\tM}{\tilde{M}}

\newcommand{\dx}{\;d\boldsymbol{x}}

\newcommand{\Abs}[1]{{\left\lvert#1\right\rvert}}
\newcommand{\abs}[1]{{\lvert#1\rvert}}
\newcommand{\Norm}[1]{{\left\lVert#1\right\rVert}}
\newcommand{\norm}[1]{{\lVert#1\rVert}}
\newcommand{\NormE}[1]{{|||#1|||}}

\newcommand{\Qali}[1]{Q_{\text{ali},#1}}
\newcommand{\Qeq}[1]{Q_{\text{eq},#1}}
\newcommand{\Q}[1]{Q_{#1}}

\DeclareMathOperator{\QD}{\Q{\D^{-1}}}
\DeclareMathOperator{\QaliM}{\Qali{\M}}
\DeclareMathOperator{\QeqM}{\Qeq{\M}}
\DeclareMathOperator{\QM}{\Q{\M}}

\theoremstyle{plain} 
\newtheorem{theorem}{Theorem}[section]
\newtheorem{lemma1}{Lemma}[section]
\newtheorem{corollary1}{Corollary}[theorem]
\theoremstyle{remark}
\newtheorem{example}{Example}[section]
\newtheorem{remark}{Remark}[section]

\newenvironment{keywords}%
   {\begin{trivlist}\item[]{\bfseries\sffamily Keywords:}~}%
   {\end{trivlist}}
\newenvironment{AMS}%
   {\begin{trivlist}\item[]{\bfseries\sffamily AMS 2010 MSC:}~}%
   {\end{trivlist}}


\begin{document}

\title{%
   Stability of~explicit one-step methods
   for~P1-finite element approximation
   of~linear diffusion equations
   on~anisotropic meshes%
   \thanks{Supported in part by
      the NSF (USA) under Grant No.~DMS-1115118,
      the DFG (Germany) under Grant No.~KA\,3215/2-1,
      the Darmstadt Graduate Schools of Excellence
         \emph{Computational Engineering}
         and~\emph{Energy Science and Engineering}.%
   }
}
\author{Weizhang Huang%
   \thanks{%
      Department of Mathematics, the University of Kansas,
      Lawrence, KS, USA
      (\href{mailto:whuang@ku.edu}%
        {\nolinkurl{whuang@ku.edu}}).%
   }
\and Lennard Kamenski%
   \thanks{%
      Weierstrass Institute,
      Berlin, Germany
      (\href{mailto:kamenski@wias-berlin.de}%
        {\nolinkurl{kamenski@wias-berlin.de}}).%
   }
\and Jens Lang%
   \thanks{%
      Department of Mathematics,
      TU Darmstadt, Germany
      (\href{mailto:lang@mathematik.tu-darmstadt.de}%
        {\nolinkurl{lang@mathematik.tu-darmstadt.de}}).%
   }
}

\maketitle

\begin{abstract}
We study the stability of explicit one-step integration schemes for the linear finite element approximation of linear parabolic equations.
The derived bound on the largest permissible time step is tight for any mesh and any diffusion matrix within a factor of $2(d+1)$, where $d$ is the spatial dimension.
Both full mass matrix and mass lumping are considered.
The bound reveals that the stability condition is affected by two factors.
The first one depends on the number of mesh elements and corresponds to the classic bound for the Laplace operator on a uniform mesh.
The other factor reflects the effects of the interplay of the mesh geometry and the diffusion matrix.
It is shown that it is not the mesh geometry itself but the mesh geometry in relation to the diffusion matrix that is crucial to the stability of explicit methods.
When the mesh is uniform in the metric specified by the inverse of the diffusion matrix, the stability condition is comparable to the situation with the Laplace operator on a uniform mesh.
Numerical results are presented to verify the theoretical findings.

\begin{keywords}%
   finite element method, anisotropic mesh, stability condition,
   parabolic equation, explicit one-step method%
\end{keywords}

\begin{AMS}%
   65M60,   
   65M50,   
   65F15
\end{AMS}
\end{abstract}

\section{Introduction}
\label{sec:introduction}

Adaptive meshes are commonly used for the numerical solution of partial differential equations (PDEs) to enhance computational efficiency but there are still lacks in the mathematical understanding of the effects of the variation of element size and shape on the properties of numerical schemes for solving PDEs.

In this paper, we are concerned with the stability of explicit one-step time integration of linear finite element approximation with general nonuniform simplicial meshes for the initial-boundary value problem (IBVP)
\begin{align}
   \begin{cases}
      \begin{alignedat}{3}
         & \partial_t u = \nabla \cdot \left(\D \nabla u \right),
            \qquad && \bx \in \Omega,
            \quad  && t \in \left( 0, T \right],
            \\
         & u(\bx, t) = 0,
            \qquad && \bx \in \Gamma_D,
            \quad  && t \in \left( 0, T \right],
            \\
         & \D \nabla u(\bx, t) \cdot \bm{n} = 0,
            \qquad && \bx \in \Gamma_N,
            \quad  && t \in \left( 0, T \right],
            \\
         & u(\bx, 0) = u_0(\bx),
            \qquad && \bx \in \Omega
      \end{alignedat}
   \end{cases}
   \label{eq:IBVP}
\end{align}
where $\Omega \subset \mathbb{R}^d$ ($d \ge 1$) is an interval, a bounded polygonal or polyhedral domain, $\Gamma_D \cup \Gamma_N = \partial \Omega$, $\Gamma_D$ has a positive $(d-1)$-volume, $u_0$ is a given initial function, and $\D$ is the diffusion matrix which is assumed to be symmetric and uniformly positive definite on $\Omega$.
In this study, we also assume that $\D$ is time independent, i.e.,\ $\D = \D(\bx)$.
Problem \cref{eq:IBVP} is isotropic when $\D(\bx) = \alpha (\bx) I$ for all $\bx$ in $\Omega$, where $\alpha$ is a scalar function and $I$ is the $d$-by-$d$ identity matrix. 
Otherwise, the problem is an anisotropic diffusion problem for which we shall consider in this work.
Anisotropic diffusion arises in various areas of science and engineering, including plasma physics~\cite{GL09}, petroleum reservoir simulation~\cite{EAK01,MD06}, and image processing~\cite{KM09,Wei98}.

Assume that $u_0 \in H^1_D(\Omega) = \left\{ v \in H^1(\Omega): \text{$v=0$ on $\Gamma_D$} \right\}$.
Then, if $u$ is sufficiently smooth, we have the stability estimates
\begin{align}
   \begin{cases}
   \begin{alignedat}{2}
      \Norm{u(\cdot,t)}_{L^2(\Omega)}
         &\le \Norm{u_0}_{L^2(\Omega)},
        && \qquad t \in \left( 0, T \right],
      \\[0.5ex]
      \NormE{u(\cdot,t)}_{H^1(\Omega)}
         &\le \NormE{u_0}_{H^1(\Omega)},
        && \qquad t \in \left( 0, T \right],
   \end{alignedat}
   \end{cases}
   \label{energy-est}
\end{align}
where $\NormE{u(\cdot, t)}_{H^1(\Omega)} \equiv \norm{\D^{1/2}\nabla u}_{L^2(\Omega)}$ is the energy norm of $u(\cdot, t)$.
It is essential that a numerical scheme for \cref{eq:IBVP} preserves the stability estimates.
The stability of the time integration depends on the largest eigenvalue of the system related to the numerical scheme which, in turn, depends on the underlying meshes and the coefficients of the IBVP.

For a uniform mesh and the Laplace operator, it is well known that the largest permissible time step is proportional to the square of the element diameter.

In the case of a nonuniform mesh or a variable diffusion matrix the situation becomes more complicated.
Essentially, one needs to estimate the largest eigenvalues of $\MA$, where $M$ and $A$ are the mass and stiffness matrices corresponding to the discretization of the IBVP.\@
This can be done by estimating the extreme eigenvalues of $M$ and $A$.
Tight bounds on those of the mass matrix $M$ for linear finite elements with locally quasi-uniform meshes are available in the literature and typically proportional to the extremal mesh element volumes~\cite{Fri73,GraMcL06,Wat87}, whereas those for the stiffness matrix $A$ are more difficult to obtain and only a few results are available in the literature for the case of nonuniform meshes.
For example, Fried~\cite{Fri73} shows how to obtain these bounds for the finite element approximation of the Laplace operator for general nonuniform meshes using local element mass and stiffness matrices.
A similar argument was used by Shewchuk~\cite{She02a} to develop a bound on the largest eigenvalue of $\MA$ in terms of the maximum eigenvalues of local element matrices for the case of a lumped mass matrix.
Graham and McLean~\cite{GraMcL06} study the finite/boundary element approximation of a general differential/integral operator on locally quasi-uniform meshes in terms of patch volumes and aspect ratios.
Du, Wang, and Zhu~\cite{DuWanZhu09} obtain bounds on the extreme eigenvalues of the stiffness matrix for the Galerkin approximation of a general diffusion operator  in terms of element geometry.
Zhu and Du~\cite{ZhuDu11,ZhuDu14} further develop bounds on the largest permissible
time step for time dependent problems.
It is worth mentioning that these existing works allow anisotropic meshes.
However, the interplay between the mesh geometry and the diffusion matrix is not really taken into account, which, as we will see, is crucially important for the stability of explicit integration schemes.
A notable exception is the bound obtained by Shewchuk~\cite{She02a}, which takes the effects of the diffusion coefficients fully into account; see \cref{rem:zhudu} for details and \cref{ex:aniso} for a numerical example.
Moreover, the existing analysis either employs some mesh regularity assumptions such as the local uniformity or involves parameters in final estimates that are related to mesh regularity, such as the maximum ratio of volumes of neighboring elements and/or the maximum number of elements in a patch.

The objective of this work is to provide estimates for the largest permissible time step which are accurate and tight for \emph{any mesh} and \emph{any diffusion matrix}.
We utilize bounds recently obtained by Kamenski~et~al.~\cite{KamHuaXu13} on the extreme eigenvalues of $M$ and the largest eigenvalue of $A$ for a general diffusion operator with arbitrary meshes.
The obtained stability condition expressed in terms of matrix entries is tight within a constant factor which is independent of the mesh and the diffusion matrix.
No assumption on the mesh regularity is made in the development.
We show that the alignment of the mesh with the diffusion matrix plays a crucial role in the stability condition: the largest permissible time step depends only on the number of mesh elements and the mesh geometry in relation to the diffusion matrix.
In particular, if the mesh is uniform in the metric specified by $\D^{-1}$, the stability condition is essentially the same as that for the Laplace operator with a uniform mesh.
Although we consider only linear finite elements, the presented analysis is applicable to high order finite elements without major modifications~\cite{HuaKamLan15}.

The paper is organized as follows.
We start in \cref{sec:setting} with the problem setting and a detailed description of mesh quality measures which are needed for the geometric interpretations of stability estimates.
The main results on stability are given in \cref{sec:explicit}; both the full mass matrix and mass lumping are considered.
Numerical examples to demonstrate the theoretical findings are presented in \cref{sec:numerical:examples}, including a two-dimensional groundwater flow problem.
Conclusions are drawn in \cref{sec:conclusion}.

\section{Linear finite element approximation}
\label{sec:setting}

We consider the standard linear finite element method for the spatial discretization of IBVP~\cref{eq:IBVP}.

We assume that a family $\{ \Th\}$ of simplicial meshes is given for $\Omega$.
While having adaptive meshes in mind, we consider the meshes to be general nonuniform ones, which may contain elements of small size and large aspect ratio.
Let $K$ be an arbitrary element of $\Th$, $\hat{K}$ the \emph{reference element}, and $\omega_i$ the element \emph{patch} of the $i^{\text{th}}$ vertex (\cref{fig:mesh:elements}).
Element and patch volumes are denoted by
\[
   \Abs{K}
   \quad \text{and} \quad
   \Abs{\omega_i} = \sum_{K \in \omega_i} \Abs{K}.
\]
For each mesh element $K\in \Th$ let  $F_K$ be the invertible affine mapping from $\hat{K}$ to $K$ (\cref{fig:mesh:elements}) and  $F_K'$ its Jacobian matrix.
Note that $F_K'$ is a constant matrix with $\det(F_K') = \Abs{K}$ (for simplicity, we assume that $\hat{K}$ is equilateral with $\abs{\hat{K}} = 1$).

\begin{figure}[t]
   \tikzsetnextfilename{hkl2013-notation}
\begin{tikzpicture}[scale = 0.9]
   \path ( 0.0,  0.0) coordinate (N0); 
   \path ( 2.0,  0.0) coordinate (N1);  
   \path ( 0.4,  2.0) coordinate (N2);
   \path (-1.8,  1.2) coordinate (N3);
   \path (-1.4, -1.0) coordinate (N4);
   \path ( 0.4, -1.6) coordinate (N5);
   \draw [] (N0) -- (N1) -- (N2) -- cycle;
   \draw [] (N0) -- (N2) -- (N3) -- cycle;
   \draw [fill = gray!15] (N0) -- (N3) -- (N4) -- cycle;
   \draw [] (N0) -- (N4) -- (N5) -- cycle;
   \draw [] (N0) -- (N5) -- (N1) -- cycle;
   \draw [fill = gray!15] (-5.0, -1.0) -- (-4.0, 0.73) -- (-3.0, -1.0) -- cycle;
   \path (-4.0,  0.1) coordinate (Khat);
   \path (-1.1,  0.3) coordinate (K);
   \node [below] at (Khat)  {$\hat{K}$};
   \node [below] at (K)  {$K$};
   \node [above right]  at (N0)  {node $i$};
   \node [right] at (1.4, -1.2)  {patch $\omega_i$};
   \filldraw [black] (N0)  circle (3.4pt);
   \path [->, line width = 1pt] ($(Khat) + (0.10, 0.05)$) edge [bend left] 
      node [above left] {$F_K$} ($(K) + (-0.15, 0.05)$);
   \path [->, line width = 1pt] ($(K) + (-0.15, -0.6)$) edge [bend left] 
      node [below right] {$F_K^{-1}$} ($(Khat) + (0.15, -0.75)$);
\end{tikzpicture}    
   \caption{Reference and mesh elements, mapping $F_K$,
      $i^{\text{th}}$ node and its patch $\omega_i$\label{fig:mesh:elements}}
\end{figure}
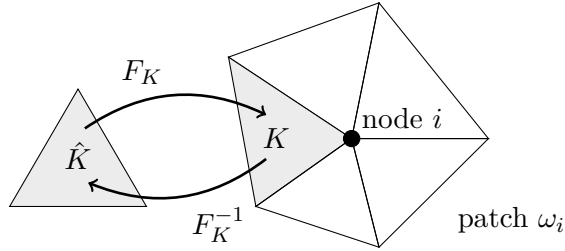

Let $V^h$ be the linear finite element space associated with mesh $\Th$.
Defining $V^h_D=V^h\cap H^1_D(\Omega)=\left\{ v^h \in V^h: \text{$v^h=0$ on $\Gamma_D$} \right\}$, the piecewise linear finite element solution $u^h(t) \in V^h_D$, $t \in \left( 0, T \right]$ is defined by
\begin{align}
   \int_\Omega v^h \partial_t u^h \dx
      &= - \int_\Omega \nabla v^h \cdot \D \nabla u^h \dx,
      && \forall v^h \in V^h_D, \quad t \in \left( 0, T \right],
   \label{eq:FEM:i}
   \\
\intertext{subject to the initial condition}
   \int_\Omega u^h(\bx, 0) v^h \dx
      &= \int_\Omega u_0(\bx) v^h \dx,
      && \forall v^h \in V^h_D .
   \label{eq:FEM:ii}
\end{align}
We denote the number of the elements of $\Th$ by $N$ and the number of the interior vertices plus the vertices associated with the Neumann boundary condition by $N_{vi}$.
If we express $u^h$ as
\[
   u^h(\bx,t) = \sum_{j=1}^{N_{vi}} u^h_j(t) \phi_j (\bx),
\]
where $\phi_j$ is the linear basis function associated the $j^{\text{th}}$ vertex ($j = 1, \dotsc, N_{vi}$), from \cref{eq:FEM:i,eq:FEM:ii} we obtain
\begin{equation}
   M \bU_t = - A \bU,
   \qquad \bU(0) = \bU_0,
   \label{eq:fem:system}
\end{equation}
where $\bU = {\left(u^h_1,\dotsc, u^h_{N_{vi}}\right)}^T$ and $M$ and $A$ are the mass and the stiffness matrices,
\begin{equation}
   M_{ij} = \int_\Omega \phi_i \phi_j \dx,
   \qquad
   A_{ij} = \int_\Omega \nabla \phi_i \cdot \D \nabla \phi_j \dx ,
   \qquad i, j = 1, \dotsc, N_{vi}.
   \label{eq:AM:i}
\end{equation}
We shall investigate how the geometry of the mesh and the anisotropy of the diffusion matrix affect the stability of explicit one-step methods for integrating \cref{eq:fem:system}.
In the following we assume that the mesh is fixed for all time steps.

\subsection{Mathematical description of nonuniform meshes; mesh quality measures}
\label{sec:mesh:notation}
An adaptive mesh, which is typically nonuniform, can be generated as a uniform one in the metric specified by a given metric tensor, which is always assumed to be symmetric and uniformly positive definite in $\Omega$~\cite{Hua05a}.
On the other hand, a metric tensor can be defined for any given mesh such that all elements are uniform in the metric specified by this tensor~\cite{HuaRus11}.
Thus, it is natural to consider nonuniform meshes in relation to a given metric tensor.
In the following, we describe several quality measures and mathematical characterizations for (nonuniform) meshes in terms of a given metric tensor $\M = \M(\bx)$.
As we will see in \cref{sec:explicit}, the matching between the mesh metric tensor and the diffusion matrix plays a crucial role for the stability condition.
In our analysis, we slightly adjust the original definitions of the mesh quality measures in~\cite{Hua05} (see also~\cite{Hua07,HuaRus11}).

Let
\begin{equation}
   \M_K = \frac{1}{\Abs{K}} \int_K \M \dx,
   \qquad
   \Abs{K}_{\M} = \Abs{K} {\det(\M_K)}^{\frac{1}{2}},
   \qquad
   \Abs{\Omega}_{\M,h} = \sum_{K \in \Th} \Abs{K}_{\M} .
   \label{eq:sigmah:def}
\end{equation}
Note that $\M_K$ is the average of $\M$ over the element $K$ and $\Abs{K}_{\M}$ and $\Abs{\Omega}_{\M,h}$ are approximate volumes of $K$ and $\Omega$ in the metric $\M$, viz.,
\[
   \Abs{K}_{\M} \approx \int_K {\det\bigl(\M(\bx)\bigr)}^{\frac{1}{2}} \dx
   \quad \text{and} \quad
   \Abs{\Omega}_{\M,h}
      \approx \sum_{K \in \Th} \int_K {\det\bigl(\M(\bx)\bigr)}^{\frac{1}{2}} \dx
      = \Abs{\Omega}_{\M}.
\]
Hereafter, without confusion we will call $\Abs{K}_{\M}$ and $\Abs{\Omega}_{\M,h}$ the volumes of $K$ and $\Omega$ in the metric $\M$, respectively.
We also define the \emph{average diameter} of element $K$ and the \emph{global average element diameter} with respect to $\M$ as
\begin{align*}
   h_{K,\M} = \Abs{K}_{\M}^{\frac{1}{d}}
   \quad \text{and} \quad
   h_{\M} = {\left(\frac{1}{N}\Abs{\Omega}_{\M,h}\right)}^{\frac{1}{d}} .
\end{align*}
The diameter $h_K$ of $K$ is defined as the length of the longest edge of $K$.

With these notations, we now are ready to describe the mesh quality measures.
The first one, the \emph{equidistribution} quality measure, is defined as the ratio of the average element volume to the volume of $K$, both measured in the metric specified by $\M_K$,
\begin{equation}
   \QeqM(K)
   = \frac{\frac{1}{N} \Abs{\Omega}_{\M,h}}{\Abs{K}_{\M}}
   = {\left( \frac{h_{\M}}{h_{K,\M}} \right)}^d .
   \label{eq:Qeq:def}
\end{equation}
It satisfies
\begin{equation}
   0 < \QeqM(K) < \infty,
   \qquad
   \frac{1}{N} \sum_{K \in \Th} \frac{1}{\QeqM(K)} = 1,
   \qquad
   \max_{K \in \Th} \QeqM(K) \ge 1.
   \label{eq:Qeq:properties}
\end{equation}

The second one, the \emph{alignment} quality measure, is local (elementwise) and measures how closely the principal directions of the circumscribed ellipsoid of $K$ are aligned with the eigenvectors of $\M_K$ and the semi-lengths of the principal axes are inversely proportional to the square root of the eigenvalues of $\M_K$.
It is defined as
\begin{equation}
   \QaliM(K)
      = \frac{\Norm{\FMkF}_2 }
             {{\detFMkF}^{\frac{1}{d}}} = h_{K,\M}^2 \Norm{\FMkF}_2.
   \label{eq:Qali:def}
\end{equation}
Since $\Norm{\FMkF}_2 \ge {\detFMkF}^{\frac{1}{d}}$, $\QaliM$ always satisfies
\[
   1 \le \QaliM(K) < \infty
\]
with $\QaliM(K) = 1$ if and only if $K$ is equilateral with respect to $\M_K$.
The alignment quality measure can be seen as an alternative to the aspect ratio of $K$ in the metric specified by $\M_K$ and it satisfies
\begin{equation}
   \QaliM(K) \le \hat{h}^2
      \cdot {\left(\frac{h_{K,\M}}{\rho_{K,\M}} \right)}^2 ,
\label{eq:Qali:def-2}
\end{equation}
where $\hat{h}$ is the length of the longest edge of $\hat{K}$ and $\rho_{K,\M}$ is the diameter of the largest sphere inscribed in the element $K$ viewed in the metric $M_K$.
To show this, we consider two points $\bx_1, \bx_2 \in K$ and the corresponding points $\bxi_1= F_K^{-1}(\bx_1)$ and $\bxi_2 = F_K^{-1}(\bx_2)$ in $\hat{K}$.
The distance between $\bx_1$ and $\bx_2$ in the metric $\M_K$ is
\begin{align*}
   \Norm{\bx_1 - \bx_2}^2_{\M_K}
      &= {\left(\bx_1 - \bx_2 \right)}^T \M_K \left(\bx_1 - \bx_2 \right)
       = {\left(\bxi_1 - \bxi_2 \right)}^T {\left(F_K'\right)}^{T}
         \M_K F_K' \left(\bxi_1 - \bxi_2 \right)
      \\
      &= \Norm{\bxi_1 - \bxi_2}_2^2
         \cdot
         \frac{{\left(\bxi_1 - \bxi_2 \right)}^T}{\Norm{\bxi_1 - \bxi_2}_2}
            {\left(F_K'\right)}^{T} \M_K F_K'
         \frac{\left(\bxi_1 - \bxi_2 \right)}{\Norm{\bxi_1 - \bxi_2}_2}
      \\
      &\le \hat{h}^2
         \cdot
         \frac{{\left(\bxi_1 - \bxi_2 \right)}^T}{\Norm{\bxi_1 - \bxi_2}_2}
            {\left(F_K'\right)}^{T} \M_K F_K'
         \frac{\left(\bxi_1 - \bxi_2 \right)}{\Norm{\bxi_1 - \bxi_2}_2}
      .
\end{align*}
If we take the minimum over all pairs of opposing points on the largest sphere inscribed in the element $K$ viewed in the metric $M_K$, then
\[
   \rho_{K,\M}^2
      \le \hat{h}^2 \lambda_{\min} \bigl( {\left(F_K'\right)}^T \M_K F_K' \bigr).
\]
Hence,
\begin{equation}
   \Norm{\FMkF}_2
      = \frac{1}{\lambda_{\min} \bigl( {\left(F_K'\right)}^T \M_K F_K' \bigr)}
      \le {\left(\frac{\hat{h}}{\rho_{K,\M}} \right)}^2 ,
\label{eq:Qali:def-3}
\end{equation}
which, together with  \cref{eq:Qali:def}, gives \cref{eq:Qali:def-2}.

The \emph{element quality} measure is defined as a combination of $\QaliM$ and $\QeqM$,
\begin{equation}
   \QM(K)
      = \QaliM(K) \cdot {\bigl(\QeqM(K)\bigr)}^{\frac{2}{d}}
      = h_{\M}^2 \Norm{\FMkF}_2 .
   \label{eq:QM:def}
\end{equation}
It measures how far $K$ is from being equilateral with a constant volume when viewed in the metric specified by $\M$.
By definition and from \cref{eq:Qali:def-3} it follows that
\begin{equation}
   0  < \QM(K)
      \le \hat{h}^2 {\left( \frac{h_{\M}}{\rho_{K,\M}} \right)}^2
      < \infty
   .
   \label{eq:QM:geometric}
\end{equation}
When a mesh is uniform with respect to $\M$ (we will refer to it as an \emph{$\M$-uniform} mesh) then it satisfies
\begin{equation}
   \QaliM(K) = 1
   \quad \text{and} \quad
   \QeqM(K) = 1,
   \qquad \forall K \in \Th,
   \label{eq:eq:ali:muniform}
\end{equation}
which is equivalent to
\begin{equation}
   \QM(K) = 1,
   \qquad \forall K \in \Th.
   \label{eq:eq:ali:i}
\end{equation}
Indeed, \cref{eq:eq:ali:i} follows directly from \cref{eq:eq:ali:muniform}.
On the other hand, since $\QaliM \ge 1$, \cref{eq:eq:ali:i} implies $\QeqM(K) \le 1$ for all $K$.
Due to the property \cref{eq:Qeq:properties}, the latter is only possible if $\QeqM(K) = 1$ for all $K$, which, in turn, implies $\QaliM(K) = 1$ for all $K$.

It is worth mentioning that an $\M$-uniform mesh satisfies
\begin{equation}
   \FMkF = h_{\M}^{-2} I,
   \quad \forall K \in \Th,
   \label{eq:muniform:fmf}
\end{equation}
since \cref{eq:eq:ali:muniform} implies that all eigenvalues of $\FMkF$ are equal to $h_{\M}$.
On the other hand, when a mesh is far from being $\M$-uniform, then
\[
   \QaliM(K) \gg 1
   \quad \text{or} \quad
   \max\limits_K \QeqM(K) \gg 1
\]
and therefore
\[
   \max\limits_K \QM(K) \gg 1 .
\]

\subsection{Preliminary results}
\label{sec:auxiliary:results}

In this subsection we present a few properties of the mass matrix $M$ and the stiffness matrix $A$ of linear finite elements, which will be used repeatedly in our analysis.
Throughout the paper the less-than-or-equal-to sign between matrix terms means that the difference between the right-hand side and left-hand side terms is positive semidefinite.

\begin{lemma1}[{\cite[Sect.~3]{KamHuaXu13}}]
\label{thm:mass:matrix}
The linear finite element mass matrix $M$ and its diagonal part $M_D$ satisfy
\begin{equation}
   \frac{1}{2} M_D \le M \le \frac{d+2}{2} M_D
   \quad \text{and} \quad
   M_{ii} = \frac{2 \Abs{\omega_i}}{(d+1) (d+2)} ,
   \qquad i=1,\dotsc,N_{vi} .
   \label{eq:M}
\end{equation}
\end{lemma1}

\begin{lemma1}
\label{thm:mass:matrix:lumped}
Let $M_{lump}$ be the lumped linear finite element mass matrix defined through
\[
    M_{ii, lump} = \int\limits_\Omega \phi_i(\bx) \cdot \sum\limits_{j=1}^{N_{vi}} \phi_j(\bx) \dx,
    \qquad i=1,\dotsc,N_{vi}.
\]
Then
\begin{equation}
   \frac{2 \Abs{\omega_i}}{(d+1) (d+2)}
   \le M_{ii, lump}
   \le \frac{ \Abs{\omega_i}}{d+1}.
   \label{eq:ML}
\end{equation}
\end{lemma1}
\begin{proof}
Since
\[
   \phi_i(\bx) \le \sum\limits_{j=1}^{N_{vi}} \phi_j(\bx) \le 1,
\]
we have
\[
   M_{ii,lump}
      \ge \int\limits_\Omega \phi^2_i(\bx) \dx
      = \sum_{K \in \omega_i} \int_K \phi_i^2(\bx) \dx
      = \sum_{K \in \omega_i}  \frac{2 \Abs{K} }{(d+1) (d+2)}
      = \frac{2 \Abs{\omega_i}}{(d+1) (d+2)}
\]
and
\[
   M_{ii, lump}
      \le \int\limits_\Omega \phi_i(\bx) \dx
      = \sum_{K \in \omega_i} \int_K \phi_i(\bx) \dx
      = \sum_{K \in \omega_i}  \frac{\Abs{K} }{d+1}
      = \frac{\Abs{\omega_i}}{d+1}
      .
\]
\end{proof}

\begin{lemma1}
\label{thm:mass:matrix:lumped:bounds}
The linear finite element mass matrix $M$ and the lumped mass matrix $M_{lump}$ satisfy
\begin{equation*}
   \frac{1}{d+2} M_{lump} \le M \le \frac{d+2}{2} M_{lump}
   .
\end{equation*}
\end{lemma1}
\begin{proof}
Since $M_D \le M_{lump}$ we get the upper bound directly from \cref{eq:M}.
Combining the lower bound in \cref{eq:M} with the upper bound in \cref{eq:ML} gives
\[
   \frac{1}{d+2} M_{lump}
   \le \frac{1}{(d+2)(d+1)}
      \diag\left( \Abs{\omega_1},\dotsc,\Abs{\omega_{N_{vi}}} \right)
   = \frac{1}{2}M_D
   \le M
   .
\]
\end{proof}

\begin{lemma1}[{\cite[Sect.~4]{KamHuaXu13}}]
\label{thm:stiffness:matrix}
The linear finite element stiffness matrix $A$ and its diagonal part $A_D$ satisfy
\begin{equation}
   A \le (d+1) A_D .
   \label{eq:A}
\end{equation}
\end{lemma1}

\begin{lemma1}
\label{thm:stiffness:matrix:ii}
Let $\D_K$ be the average of the diffusion matrix $\D$ over $K$,
\[
   \D_K = \frac{1}{\Abs{K}} \int_K \D(\bx) \dx .
\]
Then the diagonal entries of the linear finite element stiffness matrix $A$ are bounded by
\begin{equation}
   C_{\hat\nabla} \sum\limits_{K\in \omega_i} \Abs{K}
      \cdot \lambda_{\min}\bigl(\FDF\bigr)
   \le A_{ii}
   \le C_{\hat\nabla}
      \sum\limits_{K\in \omega_i} \Abs{K}
         \cdot \lambda_{\max} \bigl(\FDF\bigr) ,
   \label{eq:A:fdf}
\end{equation}
where
$
   C_{\hat\nabla}
      = \frac{d}{d+1} {\left( \frac{\sqrt{d+1}}{d!} \right)}^{\frac{2}{d}}
      .
      \label{eq:C:nabla}
$
\end{lemma1}
\begin{proof}
From \cref{eq:AM:i} we have
\[
   A_{ii}
   = \int_\Omega \nabla \phi_i^T \D \nabla \phi_i \dx
   = \sum_{K \in \omega_i} \int_K \nabla \phi_i^T \D \nabla \phi_i \dx
   = \sum_{K \in \omega_i} \Abs{K} \; \nabla \phi_i^T \D_K \nabla \phi_i .
\]
Denote the gradient operator in $\hat{K}$ by $\hat{\nabla} = \frac{\partial}{\partial \bxi }$.
By the chain rule $\nabla = {(F_K')}^{-T} \hat{\nabla}$ and
\begin{align}
   A_{ii}
   &= \sum_{K \in \omega_i} \Abs{K} \;
      \hat{\nabla} \hat{\phi}_i^T \FDF \hat{\nabla} \hat{\phi}_i
   \label{eq:aii:duni}
   \\
   &\le \sum_{K \in \omega_i} \Abs{K} \;
      \hat{\nabla} \hat{\phi}_i^T
      \hat{\nabla} \hat{\phi}_i
      \lambda_{\max}\bigl(\FDF\bigr)
   \notag
   .
\end{align}
Recall that $\hat{K}$ is taken to be equilateral.
Thus, $\hat{\nabla} \hat{\phi}_i^T  \hat{\nabla} \hat{\phi}_i = C_{\hat\nabla} $ for all $ i = 1, \dotsc, d+1$.
Consequently,
\[
   A_{ii}
      \le C_{\hat\nabla} \sum_{K \in \omega_i} \Abs{K} \;
         \lambda_{\max}\bigl(\FDF\bigr) .
\]
Similarly, we can obtain the left inequality of \cref{eq:A:fdf}.
\qquad\end{proof}

\vspace{1.5ex}
\begin{remark}
\normalfont{%
From \cref{eq:QM:def}, with $\M$ being replaced by $\D^{-1}$, the bound \cref{eq:A:fdf} on $A_{ii}$ can be expressed in terms of the element quality measure $\QD(K)$ as
\begin{equation}
   A_{ii}
      \le  C_{\hat\nabla} h_{\D^{-1}}^{-2}
         \sum\limits_{K \in \omega_i} \Abs{K} \QD(K).
   \label{eq:Aii:Q}
\end{equation}
}
\end{remark}

\begin{remark}[$\D^{-1}$-nonobtuse meshes]
\label{rem:Duniform}
\normalfont{%
Note that \cref{thm:stiffness:matrix} is very general and valid for any given mesh.
It implies that
\begin{equation}
   \lambda_{\max}(A) \le (d+1) \max_i A_{ii} .
   \label{eq:lmaxA:Ajj}
\end{equation}
This bound can be sharpened for some special types of meshes.
For example, if a mesh has no obtuse angles with respect to $\D^{-1}$ then $A$ is an M-matrix (its off-diagonal entries are non-positive) and $\sum_j A_{ij} \ge 0$ for all $i$ (e.g.,\ see the proof of Theorem 2.1 of~\cite{LiHua10}).
From the Gershgorin circle theorem we have
\[
   \lambda_{\max}(A)
      \le \max_i \left(A_{ii} + \sum_{j \neq i} \Abs{A_{ij}}\right)
      = \max_i \left(A_{ii} - \sum_{j \neq i} A_{ij}\right)
      = \max_i \left( 2 A_{ii} - \sum_j A_{ij}\right)
\]
and thus
\begin{equation}
   \lambda_{\max}(A)
      \le 2 \max_i A_{ii} .
   \label{eq:A-3}
\end{equation}
If further the mesh is $\D^{-1}$-uniform, then from \cref{eq:eq:ali:i,eq:Aii:Q} we have
\begin{equation}
   \lambda_{\max}(A)
      \le 2 \max_i A_{ii}
      \le 2 C_{\hat\nabla} h_{\D^{-1}}^{-2}
         \max_i\sum\limits_{K \in \omega_i} \Abs{K} \QD(K)
      = 2 C_{\hat\nabla} h_{\D^{-1}}^{-2} \max_i \Abs{\omega_i}.
   \label{eq:A-4}
\end{equation}
}
\end{remark}

\section{Explicit time stepping and the stability condition}
\label{sec:explicit}

In this section we study stability conditions for explicit one-step methods applied to the finite element system \cref{eq:fem:system} and obtain estimates for the maximum time step.

Suppose we are given a constant time step $\tau$.
Then an explicit one-step integration scheme with $s$ stages of order $p$ computes approximations $\bU_n\approx \bU(n\tau)$ from
\begin{equation}
   \bU_n = R(-\tau\,M^{-1}A) \bU_{n-1} ,
   \label{erk:scheme}
\end{equation}
where the stability function $R(z)$ is a polynomial in $z$ and satisfies
\begin{equation}
   R(z)
      = 1 + z + \dotso + \frac{z^p}{p!} + \sum_{i=p+1}^s \alpha_i z^i
      = e^z + \cO\left(z^{p+1}\right) .
   \label{erk:stabfunc}
\end{equation}
Classical explicit one-step methods have severe step size restrictions when solving stiff problems as \cref{eq:fem:system} for $N_{vi}\gg 1$.
An interesting alternative are stabilized explicit Runge-Kutta (RK) methods, which have an extended stability domain along the negative real axis and therefore allow for larger time steps than classical explicit one-step methods.
Parameters $\alpha_{p+1}, \dotsc, \alpha_s \in \R$ in \cref{erk:stabfunc} are chosen such that $\Abs{R(z)} \le 1$ for $z\in [-r_s,0]$ and $r_s>0$ is as large as possible.
Explicit methods have low memory demand and can be considered as a good alternative to implicit methods when the solution of algebraic equations arising from the latter is difficult and/or costly.
Impressive examples and comparison results with VODEPK (a stiff ODE solver with Krylov iterations) are documented in~\cite{HunVer03}.
Commonly used explicit methods include DUMKA, Runge-Kutta-Chebychev (RKC) and the orthogonal Runge-Kutta-Chebychev (ROCK) methods.
A common practical choice is $p=2$, but there exist also DUMKA and ROCK methods of higher order~\cite{HaiWan96}.

In the following we first study stability estimates for the approximate solutions $\bU_n$ obtained from \cref{erk:scheme}, assuming that $M$ is a full mass matrix.
However, the decomposition of a consistent mass matrix as a part of an explicit time integration method is in general not affordable, since an implicit scheme with a much larger step may be performed at the same cost.
Hence, we mainly discuss consequences of lumping the mass matrix as a routine procedure for (linear) finite elements.
Although appropriate mass lumping does not affect the overall accuracy, it is well-known that lumping the consistent mass induces dispersion errors that can affect the quality of the numerical solution.
More generally, we consider symmetric positive definite, surrogate matrices $\tM$ that satisfy
\begin{equation}
   c_1 \tM \le M \le c_2 \tM
\end{equation}
and have nearly the same complexity as the diagonal lumped mass matrix $M_{lump}$.
Correction techniques for the dispersive effects of mass lumping and several efficient choices for $\tM$ can be found in~\cite{GuePas13}.
Note that due to \cref{thm:mass:matrix:lumped:bounds} we have $c_1=1/(d+2)$ and $c_2=(d+2)/2$ for the special case $\tM=M_{lump}$.

\subsection{Stability of explicit one-step integration schemes}
The investigation of the stability is based on the following main observation: if $B$ is a normal matrix and $R$ is a rational function, then
\begin{equation}
   \Norm{R(B)}_2 = \max_i \Abs{R(\lambda_i(B))}.
\label{stab:l2norm}
\end{equation}
This fundamental relation is a direct consequence of the existence of a factorization $B=Q\,\diag \bigl(\lambda_1(B),\dotsc,\lambda_{N_{vi}}(B) \bigr)\,Q^T$ with a unitary matrix $Q$.

Using the fact that $\MAM$ and $\AMA$ are normal matrices, we can prove the stability of the linear finite element approximation computed with an explicit one-step method.
%
\begin{theorem}
\label{thm:stability}
For a given explicit one-step method with the polynomial stability function $R$, the linear finite element approximation $u^{h}_n$ satisfies
\[
  \Norm{u^{h}_{n}}_{L^2(\Omega)} \le \Norm{u^{h}_{0}}_{L^2(\Omega)}
  \quad \text{and} \quad
  \NormE{u^{h}_{n}}_{H^1(\Omega)} \le \NormE{u^{h}_{0}}_{H^1(\Omega)},
\]
if  the time step $\tau$ is chosen such that
\[
   \max_i \Abs{R\left(-\tau\lambda_i\left(\MA\right)\right)} \le 1.
\]
\end{theorem}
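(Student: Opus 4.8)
The plan is to connect the $L^2(\Omega)$ norm and the energy norm of the finite element function $u^h_n$ to weighted Euclidean norms of its coefficient vector $\bU_n$, and then to use the normality relation \cref{stab:l2norm} together with the fact that $\MA$ is self-adjoint in the appropriate inner product. Concretely, if $\bU_n$ is the coefficient vector of $u^h_n$, then by definition of the mass and stiffness matrices we have
\[
   \Norm{u^h_n}_{L^2(\Omega)}^2 = \bU_n^T M \bU_n
   \quad \text{and} \quad
   \NormE{u^h_n}_{H^1(\Omega)}^2 = \bU_n^T A \bU_n .
\]
So the two claimed inequalities are equivalent to $\bU_n^T M \bU_n \le \bU_0^T M \bU_0$ and $\bU_n^T A \bU_n \le \bU_0^T A \bU_0$.

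**First I would** rewrite the iteration $\bU_n = R(-\tau M^{-1}A)\bU_{n-1}$ after a symmetrizing change of variables. For the $L^2$ estimate, set $\bV_n = M^{1/2}\bU_n$; then $\Norm{u^h_n}_{L^2(\Omega)} = \Norm{\bV_n}_2$ and the recursion becomes $\bV_n = M^{1/2} R(-\tau M^{-1}A) M^{-1/2} \bV_{n-1} = R(-\tau\, \MAM)\,\bV_{n-1}$, using that $R$ is a polynomial so it commutes with the similarity transform. Since $\MAM$ is symmetric, hence normal, \cref{stab:l2norm} gives
\[
   \Norm{\bV_n}_2 \le \Norm{R(-\tau\,\MAM)}_2 \Norm{\bV_{n-1}}_2
      = \max_i \Abs{R\bigl(-\tau\,\lambda_i(\MAM)\bigr)} \, \Norm{\bV_{n-1}}_2 .
\]
Because $\MAM$ and $\MA$ are similar (via $M^{1/2}$), they have the same eigenvalues, so the factor is exactly $\max_i \Abs{R(-\tau\lambda_i(\MA))}$, which is $\le 1$ by hypothesis. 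Iterating down to $n=0$ yields $\Norm{u^h_n}_{L^2(\Omega)} = \Norm{\bV_n}_2 \le \Norm{\bV_0}_2 = \Norm{u^h_0}_{L^2(\Omega)}$.

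**For the energy estimate** I would do the analogous thing with $A$ in place of $M$: set $\bW_n = A^{1/2}\bU_n$ (here $A$ is symmetric positive definite on $V^h_D$, so $A^{1/2}$ makes sense), so that $\NormE{u^h_n}_{H^1(\Omega)} = \Norm{\bW_n}_2$ and the recursion becomes $\bW_n = R(-\tau\, \AMA)\,\bW_{n-1}$. Now $\AMA$ is symmetric, hence normal, so \cref{stab:l2norm} applies again; and $\AMA = A^{1/2}(M^{-1}A)A^{-1/2}$ is similar to $\MA$, so it has the same spectrum. The same telescoping argument then gives $\NormE{u^h_n}_{H^1(\Omega)} \le \NormE{u^h_0}_{H^1(\Omega)}$ under the identical step-size condition.

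**The main obstacle** is not any single calculation but getting the symmetrization bookkeeping right: one must check that $M$ and $A$ are genuinely symmetric positive definite on the space of admissible coefficient vectors (this uses the homogeneous Dirichlet condition on $\Gamma_D$ together with $\Gamma_D$ having positive $(d-1)$-measure, which makes $A$ nonsingular), that the polynomial $R$ commutes with the similarity transforms, and that the three matrices $\MA$, $\MAM$, $\AMA$ share a spectrum. All of this is routine linear algebra, and the key structural input — that $\MAM$ and $\AMA$ are normal, so their operator $2$-norm equals the spectral radius of $R$ evaluated on their eigenvalues — is exactly the observation \cref{stab:l2norm} already recorded in the excerpt. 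Hence the proof is essentially two parallel applications of \cref{stab:l2norm} plus a telescoping product.
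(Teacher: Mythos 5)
Your proposal is correct and follows essentially the same route as the paper: symmetrize via $M^{1/2}$ and $A^{1/2}$, observe that $\MAM$ and $\AMA$ are symmetric hence normal so \cref{stab:l2norm} applies, use similarity to $\MA$ to identify the spectra, and iterate the resulting contraction. The paper's proof is exactly this argument, so there is nothing to add.
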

\begin{proof}
Since $R$ is a polynomial function, we have
\[
   R(-\tau\MA)
      = M^{-\frac{1}{2}} R(-\tau\MAM) M^{\frac{1}{2}}
      = A^{-\frac{1}{2}} R(-\tau\AMA) A^{\frac{1}{2}} .
\]
From this, it is easy to see that \cref{erk:scheme} can be written as
\begin{align}
   M^{\frac{1}{2}} \bU_n &= R(-\tau\MAM) M^{\frac{1}{2}} \bU_{n-1} ,
   \label{eq:scheme4v}
   \\
   A^{\frac{1}{2}} \bU_n &= R(-\tau\AMA) A^{\frac{1}{2}} \bU_{n-1} .
   \label{eq:scheme4w}
\end{align}
Since $M$ and $A$ are symmetric and positive definite, $\MAM$ and $\AMA$ are symmetric and therefore normal.
From \cref{stab:l2norm}, our assumption on the time step and the fact that $\MA$, $\MAM$, and $\AMA$ are similar to each other, we get
\[
   \Norm{R(-\tau\MAM)}_2
   = \Norm{R(-\tau\AMA)}_2
   = \max_i \Abs{R(-\tau\lambda_i(\MA))} \le 1 .
\]
Thus, equations \cref{eq:scheme4v,eq:scheme4w} imply
\begin{align*}
   \Norm{u^{h}_{n}}_{L^2(\Omega)}
   &= \Norm{M^{\frac{1}{2}} \bU_n}_2
   \le \Norm{M^{\frac{1}{2}} \bU_{n-1}}_2
   = \Norm{u^{h}_{n-1}}_{L^2(\Omega)}
   ,
   \\
   \NormE{u^{h}_{n}}_{H^1(\Omega)}
   &= \Norm{A^{\frac{1}{2}} \bU_n}_2
   \le \Norm{A^{\frac{1}{2}} \bU_{n-1}}_2
   =\NormE{u^{h}_{n-1}}_{H^1(\Omega)}
   .
\end{align*}
Successive application of these inequalities yields the assertion.
\qquad\end{proof}

We next consider the case where the linear finite element mass matrix $M$ is replaced by a symmetric positive definite, surrogate matrix $\tM$ of lower complexity.
That means, from now on we compute approximations $\bU_n\approx \bU(n\tau)$ from
\begin{equation}
   \bU_n = R(-\tau\,\tM^{-1}A)  \bU_{n-1} .
   \label{erk:approx:scheme}
\end{equation}
%

\begin{theorem}
\label{thm:stability:approx:scheme}
For a given explicit one-step method with the polynomial stability function $R$ and a symmetric positive definite, surrogate matrix $\tM$ that satisfies $c_1 \tM \le M \le c_2 \tM$ for some positive constants $c_1$ and $c_2$, the linear finite element approximation $u^{h}_n$ satisfies
\[
   \Norm{u^{h}_{n}}_{L^2(\Omega)}
      \le \sqrt{\frac{c_2}{c_1}} \; \Norm{u^{h}_{0}}_{L^2(\Omega)}
   \quad \text{and} \quad
   \NormE{u^{h}_{n}}_{H^1(\Omega)}
      \le \NormE{u^{h}_{0}}_{H^1(\Omega)},
\]
if the time step $\tau$ is chosen such that
\[
   \max_i \abs{R(-\tau\lambda_i(\tM^{-1}A))} \le 1\,.
\]
\end{theorem}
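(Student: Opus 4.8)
The plan is to follow the proof of \cref{thm:stability} almost verbatim, the only new ingredient being the norm equivalence furnished by $c_1 \tM \le M \le c_2 \tM$. First, since $R$ is a polynomial and $\tM$ and $A$ are symmetric positive definite, I would record the similarity factorizations $R(-\tau \tM^{-1} A) = \tM^{-\frac{1}{2}} R(-\tau\, \tM^{-\frac{1}{2}} A \tM^{-\frac{1}{2}}) \tM^{\frac{1}{2}} = A^{-\frac{1}{2}} R(-\tau\, A^{\frac{1}{2}} \tM^{-1} A^{\frac{1}{2}}) A^{\frac{1}{2}}$, so that \cref{erk:approx:scheme} is equivalent to $\tM^{\frac{1}{2}} \bU_n = R(-\tau\, \tM^{-\frac{1}{2}} A \tM^{-\frac{1}{2}})\, \tM^{\frac{1}{2}} \bU_{n-1}$ and to $A^{\frac{1}{2}} \bU_n = R(-\tau\, A^{\frac{1}{2}} \tM^{-1} A^{\frac{1}{2}})\, A^{\frac{1}{2}} \bU_{n-1}$. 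Both $\tM^{-\frac{1}{2}} A \tM^{-\frac{1}{2}}$ and $A^{\frac{1}{2}} \tM^{-1} A^{\frac{1}{2}}$ are symmetric, hence normal, and each is similar to $\tM^{-1} A$; therefore \cref{stab:l2norm} together with the time-step hypothesis yields $\Norm{R(-\tau\, \tM^{-\frac{1}{2}} A \tM^{-\frac{1}{2}})}_2 = \Norm{R(-\tau\, A^{\frac{1}{2}} \tM^{-1} A^{\frac{1}{2}})}_2 = \max_i \abs{R(-\tau \lambda_i(\tM^{-1} A))} \le 1$.

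For the energy estimate the argument is identical to that of \cref{thm:stability}: from the $A^{\frac{1}{2}}$-factorization, $\NormE{u^h_n}_{H^1(\Omega)} = \Norm{A^{\frac{1}{2}} \bU_n}_2 \le \Norm{A^{\frac{1}{2}} \bU_{n-1}}_2 = \NormE{u^h_{n-1}}_{H^1(\Omega)}$, and successive application gives the claim. Since the energy norm is precisely the norm induced by $A^{\frac{1}{2}}$, no constant is incurred here — which is why the statement carries no factor $\sqrt{c_2/c_1}$ in the energy bound.

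For the $L^2$ estimate the iteration is driven by $\tM^{-1} A$ rather than $M^{-1} A$, so I would run the contraction in the $\tM$-norm and convert to the $L^2$-norm only at the two endpoints. From the $\tM^{\frac{1}{2}}$-factorization and the spectral-norm bound above, $\Norm{\tM^{\frac{1}{2}} \bU_n}_2 \le \Norm{\tM^{\frac{1}{2}} \bU_{n-1}}_2 \le \dotsb \le \Norm{\tM^{\frac{1}{2}} \bU_0}_2$. The hypothesis $c_1 \tM \le M \le c_2 \tM$ is equivalent to $c_1 \Norm{\tM^{\frac{1}{2}} \bv}_2^2 \le \Norm{M^{\frac{1}{2}} \bv}_2^2 \le c_2 \Norm{\tM^{\frac{1}{2}} \bv}_2^2$ for all $\bv$, whence $\Norm{u^h_n}_{L^2(\Omega)} = \Norm{M^{\frac{1}{2}} \bU_n}_2 \le \sqrt{c_2}\, \Norm{\tM^{\frac{1}{2}} \bU_n}_2 \le \sqrt{c_2}\, \Norm{\tM^{\frac{1}{2}} \bU_0}_2 \le \sqrt{c_2/c_1}\, \Norm{M^{\frac{1}{2}} \bU_0}_2 = \sqrt{c_2/c_1}\, \Norm{u^h_0}_{L^2(\Omega)}$.

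No genuine obstacle arises beyond this bookkeeping; the one point to watch is that the iteration must not be attempted directly in the $L^2$-norm (equivalently, the $M$-norm), in which $R(-\tau \tM^{-1} A)$ need not be a contraction — the naturally contracted norm is the $\tM$-norm, and the mismatch between it and the $L^2$-norm is exactly what produces the constant $\sqrt{c_2/c_1}$.
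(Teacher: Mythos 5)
Your proposal is correct and follows essentially the same route as the paper: replace $M$ by $\tM$ in the similarity/normality argument of the stability theorem to get contraction in the $A^{1/2}$- and $\tM^{1/2}$-norms, then use $c_1 \tM \le M \le c_2 \tM$ only at the endpoints to convert the $\tM$-norm contraction into the $L^2$ bound with the factor $\sqrt{c_2/c_1}$. The paper states this more tersely (working with squared norms), but the argument is the same.
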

\begin{proof}
Replacing $M$ by $\tM$ in the proof of \cref{thm:stability} does not change the arguments and gives
\begin{align*}
   \NormE{u^{h}_{n}}_{H^1(\Omega)}
   = \Norm{A^{\frac{1}{2}} \bU_n}_2
   &\le \Norm{A^{\frac{1}{2}} \bU_{n-1}}_2
   = \NormE{u^{h}_{n-1}}_{H^1(\Omega)},
   \\
   \Norm{\tM^{\frac{1}{2}} \bU_n}_2
   &\le \Norm{\tM^{\frac{1}{2}} \bU_{n-1}}_2 .
\end{align*}
From the first inequality, stability in the energy norm follows.
To derive stability in the $L^2$-norm, we make use of the assumption on $\tM$:
\begin{align*}
   \Norm{u^{h}_{n}}^2_{L^2(\Omega)}
      &= {(\bU_n)}^T M \bU_n
      \le c_2 \, {(\bU_n)}^T \tM \bU_n
      = c_2 \,\Norm{\tM^{\frac{1}{2}} \bU_n}_2
      \le c_2 \,\Norm{\tM^{\frac{1}{2}} \bU_{n-1}}_2
      \\
      &\le \dotso
      \le c_2 \,\Norm{\tM^{\frac{1}{2}} \bU_{0}}_2
      =  c_2 \,{(\bU_0)}^T \tM \bU_0
      \le \frac{c_2}{c_1}\, {(\bU_0)}^T M \bU_0
      = \frac{c_2}{c_1}\, \Norm{u^h_0}^2_{L^2(\Omega)},
\end{align*}
which gives the desired result.
\qquad\end{proof}

In the special case $\tM = M_{lump}$ we have the following result.
\begin{corollary1}
Under the assumptions of \cref{thm:stability:approx:scheme} and $\tM=M_{lump}$, we have
\begin{equation*}
   \Norm{u^{h}_{n}}_{L^2(\Omega)}
   \le \frac{d+2}{\sqrt{2}} \; \Norm{u^h_0}_{L^2(\Omega)}
   \quad \text{and} \quad
   \NormE{u^{h}_{n}}_{H^1(\Omega)}
   \le \NormE{u^h_0}_{H^1(\Omega)} .
\end{equation*}
\end{corollary1}

\subsection{Estimates on the largest eigenvalue%
   \texorpdfstring{ of $\tM^{-1}A$}{}}

The above results show that the contractivity of any given explicit one-step method is guaranteed if all eigenvalues of $- \tau \tM^{-1}A$ are in the corresponding stability domain $\mathcal{S} = \left\{ z \in \mathbb{C} : \Abs{R(z)} \le 1 \right\}$.
As a consequence, the key to the stability analysis of a given scheme is the estimation of the eigenvalues of $\tM^{-1}A$.
The following theorem provides such an estimate for two choices of $\tM$: $\tM=M$ and $\tM=M_{lump}$.
It turns out that in these cases the largest eigenvalue of $\tM^{-1}A$ is equivalent to the largest ratio between the corresponding diagonal entries of $A$ and $\tM$.
\begin{theorem}
\label{thm:lmax:theorem}
The eigenvalues of $\tM^{-1}A$ with $\tM$ being either $M$ or $M_{lump}$ are real and positive.
Moreover, the largest eigenvalue is bounded by
\begin{equation}
   \max_i \frac{A_{ii}}{\tM_{ii}}
   \le \lambda_{\max} \left( \tM^{-1}A \right)
   \le C_* \max_i \frac{A_{ii}}{\tM_{ii}},
   \label{eq:lmax}
\end{equation}
where $C_*$ is given in \cref{tab:lmax}.
\end{theorem}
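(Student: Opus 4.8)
The plan is to split the proof into three short pieces: spectral reality and positivity, the lower bound, and the upper bound, treating the two choices $\tM=M$ and $\tM=M_{lump}$ in parallel.

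For reality and positivity, observe that $\tM$ is symmetric positive definite (for $\tM=M$ this is classical; for $\tM=M_{lump}$ it follows from \cref{thm:mass:matrix:lumped}) and $A$ is symmetric positive definite, so $\tM^{-1}A$ is similar to $\tM^{-\frac12}A\tM^{-\frac12}$, a symmetric positive definite matrix; hence all eigenvalues of $\tM^{-1}A$ are real and positive. For the lower bound in \cref{eq:lmax} I use the Courant–Fischer characterization $\lambda_{\max}(\tM^{-1}A)=\max_{\bx\neq 0}\frac{\bx^{T}A\bx}{\bx^{T}\tM\bx}$ and test with the $i$-th coordinate vector $\bx=\be_i$, which gives $A_{ii}/\tM_{ii}\le\lambda_{\max}(\tM^{-1}A)$ for every $i$; taking the maximum over $i$ yields the left inequality.

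For the upper bound I reduce the full matrices to their diagonal parts using \cref{thm:stiffness:matrix}, $A\le (d+1)A_D$. Conjugating this inequality by the symmetric matrix $\tM^{-\frac12}$ preserves the Loewner order, and $\lambda_{\max}$ is monotone with respect to it, so $\lambda_{\max}(\tM^{-1}A)\le (d+1)\,\lambda_{\max}(\tM^{-1}A_D)$. When $\tM=M_{lump}$, both $M_{lump}$ and $A_D$ are diagonal, so $M_{lump}^{-1}A_D$ is diagonal with entries $A_{ii}/M_{ii,lump}$ and $\lambda_{\max}(M_{lump}^{-1}A_D)=\max_i A_{ii}/\tM_{ii}$; this gives the claim with $C_*=d+1$. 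When $\tM=M$, I additionally invoke \cref{thm:mass:matrix}, which gives $\tfrac12 M_D\le M$ and hence $M^{-1}\le 2M_D^{-1}$; conjugating by $A_D^{\frac12}$ and passing to $\lambda_{\max}$ gives $\lambda_{\max}(M^{-1}A_D)=\lambda_{\max}(A_D^{\frac12}M^{-1}A_D^{\frac12})\le 2\,\lambda_{\max}(A_D^{\frac12}M_D^{-1}A_D^{\frac12})=2\max_i A_{ii}/M_{ii}$, so altogether $\lambda_{\max}(M^{-1}A)\le 2(d+1)\max_i A_{ii}/M_{ii}$ and $C_*=2(d+1)$. These are the values to be recorded in \cref{tab:lmax}.

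The argument is elementary; the only delicate part is the bookkeeping. I must make sure every matrix inequality is understood in the Loewner sense, that it is only ever transformed by symmetric congruences $X\mapsto S^{T}XS$ (which preserve that order) together with passage through the monotone functional $\lambda_{\max}$ and through similarity (which leaves $\lambda_{\max}$ unchanged), and that the constants are tracked so the factor $2(d+1)$ for $\tM=M$ and $d+1$ for $\tM=M_{lump}$ come out exactly. I would also note explicitly that the reduction uses only the diagonal-comparison bounds \cref{eq:M} and \cref{eq:A} and no mesh-regularity hypothesis, which is precisely what keeps \cref{eq:lmax} valid for an arbitrary mesh and an arbitrary diffusion matrix.
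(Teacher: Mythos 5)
Your argument is correct and follows essentially the same route as the paper: reality and positivity via similarity to $\tM^{-\frac12}A\tM^{-\frac12}$, the lower bound from the Rayleigh quotient tested with the canonical basis vectors, and the upper bound from the diagonal-comparison bounds $A\le(d+1)A_D$ (\cref{thm:stiffness:matrix}) and $\tfrac12 M_D\le M$ (\cref{thm:mass:matrix}). The paper carries out the reduction directly inside the Rayleigh quotient, bounding numerator and denominator separately, whereas you phrase it through Loewner-order congruences, the antitonicity of the inverse ($\tfrac12 M_D\le M\Rightarrow M^{-1}\le 2M_D^{-1}$), and the monotonicity of $\lambda_{\max}$; this is only a cosmetic difference, and your bookkeeping correctly recovers $C_*=2(d+1)$ for $\tM=M$ and $C_*=d+1$ for $\tM=M_{lump}$.

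The one genuine omission is that \cref{tab:lmax} has a second column: for meshes with no obtuse angles with respect to $\D^{-1}$ the theorem asserts the sharper constants $C_*=4$ ($\tM=M$) and $C_*=2$ ($\tM=M_{lump}$), and your proof does not address this case. The paper obtains it by invoking \cref{rem:Duniform}: for such meshes $A$ is an M-matrix with nonnegative row sums ($A_{ij}\le 0$ for $i\ne j$ and $\sum_j A_{ij}\ge 0$), which allows the factor $d+1$ in the comparison of $A$ with its diagonal to be replaced by $2$; indeed, $\bv^T A\bv\le \sum_i v_i^2\bigl(2A_{ii}-\sum_j A_{ij}\bigr)\le 2\,\bv^T A_D\bv$, so $A\le 2A_D$ in the Loewner sense, after which your congruence argument goes through verbatim with $d+1$ replaced by $2$. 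Adding this observation would complete the proof of the theorem as stated.
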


\begin{table}[t]
   \caption{$C_*$ in \cref{thm:lmax:theorem}\label{tab:lmax}}
   \begin{tabular}{lcc}
      \toprule%
                       & general meshes       & nonobtuse w.r.t.\ $\D^{-1}$\\
      \midrule%
      $\tM = M$        & $2\left(d+1\right)$  & $4$\\
      $\tM = M_{lump}$ & $d+1$                & $2$\\
      \bottomrule%
   \end{tabular}
\end{table}

\begin{proof}
Since $\tM$ and $A$ are symmetric and positive definite and $\tM^{-1}A$ is similar to the symmetric matrix $\tM^{-\frac{1}{2}} A \tM^{-\frac{1}{2}}$, the eigenvalues of $\tM^{-1}A$ are real and positive.

Using the canonical basis vectors $\be_i$ gives
\begin{align*}
   \lambda_{\max} \bigl( \tM^{-1}A \bigr)
   &= \max_{\bv \neq 0}
      \frac {\bv^T A \bv}
            {\bv^T \tM \bv}
   \ge \max_i \frac {\be_i^T A \be_i}
            {\be_i^T \tM \be_i}
   = \max_i \frac{A_{ii}}{\tM_{ii}}.
\end{align*}
Let us first have a look at the case $\tM=M$.
\Cref{thm:mass:matrix,thm:stiffness:matrix} yield
\begin{equation}
   \lambda_{\max} \left( \MA \right)
       = \max_{\bv \neq 0}
         \frac {\bv^T A \bv}
               {\bv^T M \bv}
      \le \max_{\bv \neq 0}
         \frac {(d+1) \bv^T A_D \bv}
               {\frac{1}{2} \bv^T M_D \bv}
      \label{eq:lmax:halve}
      = 2(d+1) \max_i \frac{A_{ii}}{M_{ii}}.
\end{equation}

For the special case of meshes with nonobtuse angles with respect to $\D^{-1}$, the above bound can be sharpened by replacing the factor $d+1$ in \cref{eq:lmax:halve} with $2$ (see \cref{rem:Duniform}).
If $\tM=M_{lump}$ then the factor $1/2$ in the denominator of \cref{eq:lmax:halve} can be replaced by $1$ since $M_{lump}$ is already diagonal.
\qquad\end{proof}

\vspace{1.5ex}
\begin{example}[Stabilized Runge-Kutta methods]
\label{ex:explicit:SRK}
\normalfont{%
The stability region of a stabilized RK method of order $p=1$ with $s$ stages extends along the negative real axis of the complex plane, including the interval $\left[-2s^2, 0\right]$~\cite[p.~31f.]{HaiWan96}.
Thus, the method is stable if all eigenvalues of $-\tau \tM^{-1}A$ are between $-2s^2$ and $0$.
This leads to the stability condition
\begin{equation}
   \tau \le \frac{2s^2}{ \lambda_{\max}\left(\tM^{-1}A \right) }.
   \label{eq:tau:srk:exact}
\end{equation}
Using \cref{thm:lmax:theorem} and noticing that ${\left(\max_i \frac{A_{ii}}{\tM_{ii}} \right)}^{-1} = \min_i \frac{\tM_{ii}}{A_{ii}}$, we obtain a bound for the largest permissible time step $\tau_{\max}$ as
\begin{equation}
   \frac{2s^2}{C_*} \min_i \frac{\tM_{ii}}{A_{ii}}
   \le \tau_{\max} \le
   2s^2 \min_i \frac{\tM_{ii}}{A_{ii}} .
   \label{eq:tau:SRK}
\end{equation}
Clearly, if
\[
   \tau > 2s^2 \min_i \frac{\tM_{ii}}{A_{ii}},
\]
we have $\Abs{R\left(-\tau \lambda_{\max}\left(\tM^{-1}A\right)\right)} > 1$ and the scheme becomes unstable.
In order to guarantee stability, the step size has to be chosen such that
\[
   \tau \le \frac{2s^2}{C_*} \min_i \frac{\tM_{ii}}{A_{ii}}.
\]
Note that here $\tM=M$ or $\tM=M_{lump}$.
}
\end{example}

\vspace{1.5ex}
In practice, a few steps of a nonlinear power method are often sufficient to estimate the spectral radius automatically, especially if the eigenvalues are close to the negative real axis.
However, the power method can degenerate in many ways, so precaution has to be taken and theoretical bounds can be helpful.
Such bounds are also necessary for gaining insight on the effects of mesh geometry on the stability of explicit integration schemes and the maximum allowed step size.
The estimate in \cref{thm:lmax:theorem} is easily computable but it does not explain how the mesh geometry affects the time step.
To reveal these effects, we provide several geometric formulations of the estimate in the following.
First, substituting \cref{eq:M,eq:Aii:Q} for $\tM_{ii}$ and $A_{ii}$ in \cref{thm:lmax:theorem} gives the following corollary.

\begin{corollary1}
\label{thm:lmax:geo}
The largest eigenvalue of $\tM^{-1}A$ is bounded by
\begin{align}
   \lambda_{\max}(\tM^{-1}A)
   & \le 
   C_* C_{\#}
      \max_i \sum\limits_{K \in \omega_i}
         \frac{\Abs{K}}{\Abs{\omega_i}} \Norm{\FDF}_2
   \label{eq:stability:geo}
   \\
   &=
      C_* C_{\#} h_{\D^{-1}}^{-2}
      \max_i \sum\limits_{K \in \omega_i}
         \frac{\Abs{K}}{\Abs{\omega_i}} \QD(K)
   \label{eq:stability:geo-2}
   ,
\end{align}
where $C_{\#} = \frac{1}{2} C_{\hat\nabla} (d+1) (d+2)$, $C_{\hat\nabla}$ and  $C_*$ are given in \cref{eq:C:nabla,tab:lmax} and the element quality $\QD(K)$ is defined in \cref{eq:QM:def} (with $\mathbb{M}$ being replaced by $\mathbb{D}^{-1}$).
\end{corollary1}

The factor $h_{\D^{-1}}^{-2}$ in \cref{eq:stability:geo} corresponds to $h^2$ in the classic stability condition $\tau \sim h^2$ for uniform meshes with the Laplace operator.
Since
\[
   h_{\D^{-1}}
      = {(\Abs{\Omega}_{\D^{-1},h}/N)}^{\frac 1 d }
      \to {(\Abs{\Omega}_{\D^{-1}}/N)}^{\frac 1 d }
\]
as the mesh is being refined, $h_{\D^{-1}}$ can be considered independent of the mesh geometry and therefore it essentially depends only on $N$, $\D$, and $\Omega$.

The mesh geometry effect is reflected mainly through the patch-average of $\Norm{\FDF}_2$ or, alternatively, the element quality measure $\QD(K)$.
Recall from \cref{eq:QM:geometric} that $\QD(K)$ can be seen as a ratio of the average element size to the diameter of the largest sphere inscribed in $K$, both measured in the metric $\D_K^{-1}$.

Hence, we can conclude that the largest possible time step depends on \emph{the number of mesh elements} and \emph{the correspondence of the geometry of the mesh elements to $\D^{-1}$}.
In other words, it is not the mesh geometry itself but \emph{the mesh geometry in relation to the diffusion matrix that matters for the stability of explicit schemes.}

We now study the situation with an $\M$-uniform mesh for a general metric tensor $\M$.
Recall that such a mesh satisfies \cref{eq:muniform:fmf}, which can be rewritten as
\[
   {(F_K')}^{-T} {(F_K')}^{-1} =  h_{\M}^{-2} \M_K
   \qquad \forall K \in \Th.
\]
Then,
\[
   \QD(K)
      = h_{\D^{-1}}^2 \Norm{\FDF}_2
      = {\left( \frac{h_{\D^{-1}}}{h_{\M}} \right)}^2
         \Norm{\M_K \D_K}_2.
\]
Inserting this into \cref{eq:stability:geo} we get
\begin{equation}
   \lambda_{\max}(\tM^{-1}A)
   \le C_* C_{\#} h_{\M}^{-2}
      \max_i \sum\limits_{K \in \omega_i}
      \frac{\Abs{K}}{\Abs{\omega_i}} \cdot \Norm{\M_K \D_K}_2 .
   \label{eq:stability:geo:ii}
\end{equation}
Once again, this shows that the largest eigenvalue of $\tM^{-1}A$ and, consequently, the largest permissible time step depend on the number of elements and the matching between the mesh (essentially determined by $\M$) and the diffusion matrix.
If mesh adaptation and the major diffusion directions match, the largest permissible time step depends mainly on the number of mesh elements.
A mismatch between (anisotropic) mesh adaptation and the diffusion directions can lead to a drastic reduction of the time step (see~\cref{ex:ZhuDu} in \cref{sec:numerical:examples}).
In particular, it implies that one gets both accuracy and stability with the same grid if the solution anisotropy is in correspondence with the diffusion and one would have to trade off accuracy for stability if the demands of accuracy and stability contradict each other (see also remarks by Shewchuk~\cite[Sect.~4.3]{She02a}).
To some extent, the demands of accuracy and stability can be combined using a metric tensor in the form
\[
   \M_K = \theta_K \D_K^{-1} \quad \forall K \in \Th,
\]
where $\theta_K$ is a scalar function based on some (isotropic) error estimate; a similar idea has been used in~\cite{LiHua10} to combine mesh adaptation with satisfaction of the maximum principle.
This will not provide full mesh adaptation but, at least, some degree of it.

\vspace{1.5ex}%
\begin{remark}[Special cases]%
\label{rem:special:cases}%
\normalfont{%
For a uniform mesh ($\M = I$), we have 
\[
   \lambda_{\max}(\tM^{-1}A)
   \le  C_* C_{\#} h^{-2} 
      \max_i \sum\limits_{K \in \omega_i}
      \frac{\Abs{K}}{\Abs{\omega_i}} \cdot \Norm{\D_K}_2
      \approx  C_* C_{\#} h^{-2} 
      \max_i \Norm{\D_{\omega_i}}_2
      ,
\]
where $\D_{\omega_i}$ denotes the average of $\D$ over a patch $\omega_i$.

In case of coefficient-adaptive ($\D^{-1}$-uniform) meshes ($\M = \D^{-1}$), mesh adaptation and diffusion directions match exactly
and \cref{eq:muniform:fmf,eq:aii:duni} yield $A_{ii} = C_{\hat{\nabla}} \Abs{\omega_i} / h_{\D^{-1}}^2$.
Thus, using \cref{eq:M,thm:lmax:theorem} gives
\[
   \lambda_{\max}(\tM^{-1}A) \sim h_{\D^{-1}}^{-2}
   \sim N^{\frac{2}{d}}
   .
\]
}%
\end{remark}

\begin{remark}[Comparison to results available in the literature]%
\label{rem:zhudu}%
\normalfont{%
For the full mass matrix Zhu and Du~\cite[Theorem~3.1]{ZhuDu14} developed an estimate in terms of the element geometry and the eigenvalues of the diffusion matrix, which is valid for $d \ge 2$ and $P_k$ finite elements.
For the linear finite elements it becomes
\begin{align}
   \frac{\max\limits_K \lambda_{\min}(\D_K) \bZ_K}{d(1 + c_1 p_{\max} (d+2) )}
   &\le \lambda_{\max}\left(\MA\right) \le
   (d+2) \max\limits_K \lambda_{\max}(\D_K) \bZ_K,
   \label{eq:lmax:ZhuDu} 
   \\
   \bZ_K & = \frac{d+1}{d^2} \sum_{i_K} \frac{ \Abs{V_{i_K}}^2}{\Abs{K}^2},
\notag
\end{align}
where $\Abs{V_{i_K}}$ is the volume of a $(d-1)$-dimensional face opposing the $i_K$-th vertex of $K$, $p_{\max}$ is the maximum number of elements in a patch, and $c_1$ is the maximum ratio between the volumes of neighboring elements.
The ratio of the upper bound to the lower one is approximately $d {(d+2)}^2 c_1 p_{\max} \kappa(\D)$, where $\kappa(\D) = \lambda_{\max}(\D_K) / \lambda_{\min}(\D_K)$.

Geometric bound \cref{eq:stability:geo} is similar to \cref{eq:lmax:ZhuDu} but there is a significant difference.
Since $\bZ_K \sim \norm{\FF}_2$, 
the interplay between the mesh geometry and the diffusion matrix in \cref{eq:stability:geo,eq:lmax:ZhuDu} is mainly reflected by
\[
   \Norm{\FDF}_2
   \quad \text{and} \quad
   \lambda_{\max}(\D_K) \Norm{\FF}_2 ,
\]
respectively.
If either the mesh or $\D$ are isotropic then the factors are comparable.
However, if both the mesh and $\D$ are anisotropic then the former factor can be much smaller than the latter.
In the worst situation, the accuracy of \cref{eq:lmax:ZhuDu} can deteriorate proportionally to $\kappa(\D)$ (see \cref{ex:aniso} in \cref{sec:numerical:examples}), whereas the bound \cref{eq:lmax} in \cref{thm:lmax:theorem} in terms of matrix entries is sharp within a factor of at most $2 (d+1)$, independently of the mesh and $\D$.

In the case of mass lumping, Shewchuk~\cite[Sect.~3]{She02a} obtained geometric bounds in two and three dimensions. The bounds can be generalized to any dimension as
\begin{equation}
   \frac{1}{d} \max_K \bS_K \leq \lambda_{\max}\left(\tM^{-1}A\right) \leq p_{\max} \max_K \bS_K,
   \quad 
   \bS_K =  \frac{1}{d^2} \sum_{i_K} 
      \frac{\Abs{K}}{\tilde{M}_{i_K i_K}} 
      \frac{\abs{V_{i_K}}_{\D^{-1}}^2}{\Abs{K}_{\D^{-1}}^2 } ,
   \label{eq:lmax:Shewchuk}
\end{equation}
where $\Abs{V_{i_K}}_{\D^{-1}}$ is the volume of a $(d-1)$-dimensional face opposing the $i_K$-th vertex of $K$ with respect to $\D^{-1}$ and $\tilde{M}_{i_K i_K}$ is the entry of the (global) lumped mass matrix corresponding to the node $i_K$.
The bound takes the interplay between the mesh shape and $\D$ fully into account and is tight within a factor of $d p_{\max}$, independently of $\D$, but it still has a weak mesh dependence through $p_{\max}$ (typically, $p_{\max} \ge 6$ in 2D and can be much larger in higher dimensions).
Numerical examples in \cref{sec:numerical:examples} show that it is comparable but less accurate than bound \cref{eq:lmax} obtained in this paper.%

For the lumped case there is also an earlier result by Zhu and Du~\cite[Theorem~3.1]{ZhuDu11} but we omit it in this study since it is less accurate than Shewchuk's bound \cref{eq:lmax:Shewchuk}.
}%
\end{remark}

\section{Numerical examples}
\label{sec:numerical:examples}

To test the developed estimates we continue \cref{ex:explicit:SRK} (stabilized Runge-Kutta methods) and compare the exact value of the largest permissible time step \cref{eq:tau:srk:exact} with the lower bound \cref{eq:tau:SRK},
\[
   \tau_{\max} = \frac{2s^2}{\lambda_{\max}(\MA)}
   \quad \text{and} \quad
   \tau_h = \frac{2s^2}{C_*} \min_i \frac{M_{ii}}{A_{ii}},
\]
and compute the ratio $\tau_{\max} / \tau_h$ to evaluate the accuracy of the estimate.
Since $\tau_{\max} / \tau_h$ is independent of the number of stages $s$, we rescale the values of $\tau_{\max}$ and $\tau_h$ by $s^{-2}$ to stay general, i.e.,\ in the following we compare
\begin{equation}
   \frac{\tau_{\max}}{s^2} = \frac{2}{\lambda_{\max}(\MA)}
   \quad \text{with} \quad
   \frac{\tau_h}{s^2} = \frac{2}{C_*} \min_i \frac{M_{ii}}{A_{ii}}.
   \label{eq:tauh:mat}
\end{equation}
Note that \cref{eq:tau:SRK} implies that $1 \le \tau_{\max} / \tau_h \le C_*$ for any mesh and any diffusion matrix $\D$.
Moreover, from \cref{eq:stability:geo:ii},
\begin{equation}
   \frac{\tau_{\max}}{s^2}
      \ge \frac{2 h_{\M}^{2}}
      {C_* C_{\#} \max_i \frac{1}{\Abs{\omega_i}}
      \sum\limits_{K \in \omega_i} \Abs{K}\cdot \Norm{\M_K \D_K}_2} .
\label{eq:tauh:geo}
\end{equation}

\begin{example}[1D example~{\cite[Sects.~6.1 and~6.2]{PetSau12}}]
\label{sec:PetSau}
\normalfont{%
As a first example we consider the heat diffusion $u_t = {(\D u_x)}_x$ in $\Omega= (0,1)$ with the diffusion coefficients
\[
   \D(x)
      = {\left(
         2 - \sin\left( 2 \pi \frac{x}{\varepsilon} \right)
      \right)}^{-1}
   \quad \text{and} \quad
   \D(x)
      = {\left(
         2 - \sin\left( 2 \pi \tan \frac{(1-\varepsilon)\pi x}{2} \right)
      \right)}^{-1} ,
\]
where $\varepsilon$ is a positive parameter (\cref{fig:PetSau}).
We choose $\varepsilon = 2^{-4}$ for our tests.

\begin{figure}[t]
   \subcaptionbox{periodic\label{fig:PetSau:D:per}}[0.5\textwidth]
   {%
      \tikzsetnextfilename{hkl2013-ps-per-D}%
      \centering%
      \begin{tikzpicture}%
         \begin{axis}[%
               ymin=0.25, ymax=1.075,
               xmin=-0.05, xmax=1.05,
               width=0.45\linewidth,
               height=0.32\linewidth,
            ]
            \addplot[smooth, domain=0:1, samples=65]
               {1 / ( 2 - sin(360*x*16) )};
       \end{axis}%
   \end{tikzpicture}%
   }%
   \subcaptionbox{nonperiodic\label{fig:PetSau:D:nonp}}[0.5\textwidth]
   {%
      \tikzsetnextfilename{hkl2013-ps-nonp-D}%
      \centering%
      \begin{tikzpicture}%
         \begin{axis}[%
               ymin=0.25, ymax=1.075,
               xmin=-0.05, xmax=1.05,
               width=0.45\linewidth,
               height=0.32\linewidth,
            ]
            \addplot[smooth, domain=0:tan(15*180/32), samples=257]
               ({atan(x)*32/15/180}, {(1 / ( 2 - sin( 360 * x) )});
         \end{axis}%
      \end{tikzpicture}%
   }
   \caption{Diffusion coefficients $\D$ in 1D (\cref{sec:PetSau})\label{fig:PetSau}}
\end{figure}

Numerical results in \cref{tab:PetSau} show that $1.00 \le \tau_{\max} / \tau_h \le 1.45$ for all considered meshes and cases, which is consistent with the theoretical prediction $1 \le \tau_{\max} / \tau_h \le 2$ (with mass lumping) and $1 \le \tau_{\max} / \tau_h \le 4$ (without mass lumping).
Interestingly, for this example, the estimate appears to be even asymptotically exact ($\tau_{\max} / \tau_h \to 1$ as $N \to \infty$) except for the case of $\D^{-1}$-uniform meshes with mass lumping.

\Cref{tab:PetSau} further shows that in case of mass lumping  $\tau_{\max}$ is roughly three times as large as $\tau_{\max}$ without mass lumping.
The largest permissible time step $\tau_{\max}$ for $\D^{-1}$-uniform meshes is approximately \numrange{1.4}{1.8} times as large as for uniform meshes.
}\vspace{1.5ex}
\end{example}

\afterpage{%
\begin{table}[t]
   \caption{Numerical results in 1D (\cref{sec:PetSau}\label{tab:PetSau})}
   \begin{subtable}[c]{1.0\linewidth}
      \centering
      \subcaption{periodic $\D$ (\cref{fig:PetSau:D:per})\label{tab:PetSau:periodic}}
      \begin{tabular}[b]{@{} r llr llr @{}}
         \toprule%
            & \multicolumn{3}{c}{with mass lumping}
            & \multicolumn{3}{c}{without mass lumping}\\
         \cmidrule(r){2-4}
         \cmidrule(r){5-7}
            $N\phantom{12}$
            & $\frac{\tau_{\max}}{s^2}$
            & $\frac{\tau_h}{s^2}$
            & $\frac{\tau_{\max}}{\tau_h}$
            & $\frac{\tau_{\max}}{s^2}$
            & $\frac{\tau_h}{s^2}$
            & $\frac{\tau_{\max}}{\tau_h}$ \\
         \midrule%
         \multicolumn{7}{c}{uniform meshes} \\
         \midrule%
            \input{hkl2013-ps-per-uni.dat}
         \midrule%
         \multicolumn{7}{c}{$\D^{-1}$-uniform meshes} \\
         \midrule%
            \input{hkl2013-ps-per-duni.dat}
         \bottomrule%
      \end{tabular}
      \vspace{3ex}
   \end{subtable}
   \begin{subtable}[c]{1.0\linewidth}
      \centering
      \subcaption{nonperiodic $\D$
         (\cref{fig:PetSau:D:nonp})\label{tab:PetSau:nonperiodic}}
      \begin{tabular}[b]{@{} r llr llr @{}}
         \toprule%
            & \multicolumn{3}{c}{with mass lumping}
            & \multicolumn{3}{c}{without mass lumping}\\
         \cmidrule(r){2-4}
         \cmidrule(r){5-7}
            $N\phantom{12}$
            & $\frac{\tau_{\max}}{s^2}$
            & $\frac{\tau_h}{s^2}$
            & $\frac{\tau_{\max}}{\tau_h}$
            & $\frac{\tau_{\max}}{s^2}$
            & $\frac{\tau_h}{s^2}$
            & $\frac{\tau_{\max}}{\tau_h}$ \\
         \midrule%
         \multicolumn{7}{c}{uniform meshes} \\
         \midrule%
            \input{hkl2013-ps-nonp-uni.dat}
         \midrule%
         \multicolumn{7}{c}{$\D^{-1}$-uniform meshes} \\
         \midrule%
            \input{hkl2013-ps-nonp-duni.dat}
         \bottomrule%
      \end{tabular}
   \end{subtable}
\end{table}
\clearpage{}
}

\begin{example}[2D example, $\D = I$]
\label{ex:ZhuDu}
\normalfont{%
In this example we consider the most simple case of $\D=I$.
Mesh examples are taken from~\cite{ZhuDu11,ZhuDu14}; they are: uniform isotropic, uniform anisotropic and strongly refined towards the boundary (\cref{fig:ZhuDu:mesh}).
Since these meshes have no obtuse angles, we can use sharper bounds with $C_* = 2$ (mass lumping) or $C_* = 4$ (no mass lumping) and therefore we expect that $1 \le \tau_{\max} / \tau_h \le 2$ or $1 \le \tau_{\max} / \tau_h \le 4$, respectively.

\Cref{tab:square:meshes} shows that $1.14 \le \tau_{\max} / \tau_h \le 1.69$ (mass lumping) and $1.18 \le \tau_{\max} / \tau_h \le 2.33$ (no mass lumping).
In comparison, the same ratio if using \cref{eq:lmax:ZhuDu} and \cref{eq:lmax:Shewchuk} ranges from 
\numrange{1.78}{3.50}%
\footnote{In our tests, the estimate by Zhu and Du~\cite{ZhuDu14} seems to provide better results than in the numerical examples of the original paper.}
and \numrange{4.00}{6.77}, respectively.
In this example $\D = I$, so that the difference is mainly due to the fact that estimates in terms of mesh geometry are generally less tight than those in terms of matrix entries since additional estimation steps decrease the accuracy.

Notice the significant reduction of $\tau_{\max}$ when the mesh gets adapted in the ``wrong'' way, i.e.,\ away from $\D^{-1}$.
For example
, a $32 \times 32$ uniform mesh requires $\tau_{\max} = \num{2.38e-4}$, whereas the $4\times 256$ mesh with the same number of elements requires $\tau_{\max} = \num{6.36e-6}$, a reduction by a factor of $37$.
A strongly anisotropic $4\times16$ mesh adapted towards the boundary with a much smaller number of elements leads to the further reduction of the step size by a factor of \num{3000}.
Thus, the matching between the element geometry and the diffusion matrix has significant effects on the time step size and, depending on the anisotropy of the mesh and diffusion matrix, changes in the mesh alignment can result in changes in the time step size by orders of magnitude.

Again, mass lumping allows approximately \numrange{1.9}{3.2} times larger time steps.

\begin{figure}[t]
   \hfill{}%
   \subcaptionbox{uniform isotropic\label{fig:ZhuDu:mesh:iso}}[0.33\textwidth]
   {%
      \tikzsetnextfilename{hkl2013-zd-uni-iso-mesh}%
      \centering%
      \begin{tikzpicture}[scale=0.9]
         \begin{axis}[axis equal, hide axis,
               xmin = 0.0, xmax = 1.0, ymin = 0.0, ymax = 1.0,
               colormap={bw}{gray(0cm)=(0); gray(1cm)=(0)},
               height=0.40\textwidth,
            ]
            \addplot[patch, fill=white, patch table = {hkl2013-zd-uni-iso-elements.dat}]
               table [x index=0, y index=1] {hkl2013-zd-uni-iso-nodes.dat};
         \end{axis}%
      \end{tikzpicture}%
   }%
   \hfill{}%
   \subcaptionbox{uniform anisotropic\label{fig:ZhuDu:mesh:ani}}[0.33\textwidth]
   {%
      \tikzsetnextfilename{hkl2013-zd-uni-ani-mesh}%
      \centering%
      \begin{tikzpicture}[scale=0.9]
         \begin{axis}[axis equal, hide axis,
               xmin = 0.0, xmax = 1.0, ymin = 0.0, ymax = 1.0,
               colormap={bw}{gray(0cm)=(0); gray(1cm)=(0)},
               height=0.40\textwidth,
            ]
            \addplot[patch, fill=white, patch table = {hkl2013-zd-uni-ani-elements.dat}]
               table [x index=0, y index=1] {hkl2013-zd-uni-ani-nodes.dat};
         \end{axis}%
      \end{tikzpicture}%
   }%
   \hfill{}%
   \subcaptionbox{boundary layer\label{fig:ZhuDu:mesh:p2}}[0.33\textwidth]
   {%
      \tikzsetnextfilename{hkl2013-zd-p2-mesh}%
      \centering%
      \begin{tikzpicture}[scale=0.9]
         \begin{axis}[axis equal, hide axis,
               xmin = 0.0, xmax = 1.0, ymin = 0.0, ymax = 1.0,
               colormap={bw}{gray(0cm)=(0); gray(1cm)=(0)},
               height=0.40\textwidth,
            ]
            \addplot[patch, fill=white, patch table = {hkl2013-zd-p2-elements.dat}]
               table [x index=0, y index=1] {hkl2013-zd-p2-nodes.dat};
         \end{axis}%
      \end{tikzpicture}%
    }%
   \hfill{}%
   \caption{Mesh examples in 2D (\cref{ex:ZhuDu})\label{fig:ZhuDu:mesh}}
\end{figure}
}\vspace{1.5ex}
\end{example}

\begin{table}[p]
   \caption{Numerical results in 2D (\cref{ex:ZhuDu})\label{tab:square:meshes}}
   \begin{tabular}[b]{@{} lr l lr lr @{}}
      \toprule%
       \multicolumn{3}{c}{without mass lumping}
       & \multicolumn{2}{c}{new estimate \cref{eq:tauh:mat}}
       & \multicolumn{2}{c}{Zhu \& Du~\cite{ZhuDu14}}\\
      \cmidrule(r){4-5}
      \cmidrule(r){6-7}
         \multicolumn{1}{@{}l}{\phantom{1}mesh}
         & $N\phantom{1}$
         & $\frac{\tau_{\max}}{s^2}$
         & $\frac{\tau_h}{s^2}$
         & $\frac{\tau_{\max}}{\tau_h}$
         & $\frac{\tau_h}{s^2}$
         & $\frac{\tau_{\max}}{\tau_h}$ \\
      \midrule%
      \multicolumn{7}{c}{uniform isotropic (\cref{fig:ZhuDu:mesh:iso})} \\
      \midrule%
         \input{hkl2013-zd-uni-iso.dat}
      \midrule%
      \multicolumn{7}{c}{uniform anisotropic (\cref{fig:ZhuDu:mesh:ani})} \\
      \midrule%
         \input{hkl2013-zd-uni-ani.dat}
      \midrule%
      \multicolumn{7}{c}{boundary layer (\cref{fig:ZhuDu:mesh:p2})}\\
      \midrule%
         \input{hkl2013-zd-p2.dat}
      \bottomrule%
      \\[-0.313pt]
      \toprule%
       \multicolumn{3}{c}{with mass lumping}
       & \multicolumn{2}{c}{new estimate \cref{eq:tauh:mat}}
       & \multicolumn{2}{c}{Shewchuk~\cite{She02a}}\\
      \cmidrule(r){4-5}
      \cmidrule(r){6-7}
         \multicolumn{1}{@{}l}{\phantom{1}mesh}
         & $N\phantom{1}$
         & $\frac{\tau_{\max}}{s^2}$
         & $\frac{\tau_h}{s^2}$
         & $\frac{\tau_{\max}}{\tau_h}$
         & $\frac{\tau_h}{s^2}$
         & $\frac{\tau_{\max}}{\tau_h}$ \\
      \midrule%
      \multicolumn{7}{c}{uniform isotropic (\cref{fig:ZhuDu:mesh:iso})} \\
      \midrule%
         \input{hkl2013-zd-uni-iso-lump.dat}
      \midrule%
      \multicolumn{7}{c}{uniform anisotropic (\cref{fig:ZhuDu:mesh:ani})} \\
      \midrule%
         \input{hkl2013-zd-uni-ani-lump.dat}
      \midrule%
      \multicolumn{7}{c}{boundary layer (\cref{fig:ZhuDu:mesh:p2})}\\
      \midrule%
         \input{hkl2013-zd-p2-lump.dat}
      \bottomrule%
   \end{tabular}%
\end{table}

\begin{example}[2D groundwater flow with jumping coefficients~\cite{MicPer08}]
\label{ex:waterflow}
\normalfont{%
As the next example we consider groundwater flow through an aquifer.
The problem is given by the IBVP~\cref{eq:IBVP} with $\Omega = (0,100) \times (0,100)$ and two impermeable subdomains $\Omega_1 = (0,80) \times (64,68)$ and $\Omega_2 = (20,100) \times (40,44)$.
\Cref{fig:waterflow:domain} shows the diffusion matrix $\D$ and the boundary conditions.
Although $\D$ is isotropic, it has a jump between the subdomains, leading to the anisotropic behavior of the solution.

We compute the solution by $h$-refinement in the standard way and use Hessian recovery based mesh adaptation to obtain adaptive meshes at particular time points and compare the exact $\tau_{\max}$ with the lower bound $\tau_h$.
For our computation we used \texttt{KARDOS}~\cite{ErdLanRoi02} to solve the PDE and \texttt{BAMG}~\cite{bamg} for mesh generation.
Examples of adaptive meshes are shown in \cref{fig:waterflow} for the time points $t=\num{1.0e4}$ and $t=\num{1.0e5}$.
Note that these meshes have oblique elements and angles close to \ang{180}: the maximum angles in \cref{fig:waterflow:iv,fig:waterflow:v} are \ang{175} and \ang{177}, respectively.

\Cref{tab:waterflow} shows that the ratio $\tau_{\max} / \tau_h$ is about \numrange{2.13}{2.48} with mass lumping and \numrange{3.25}{3.87} without mass lumping, which is consistent with the theoretical upper bounds $d + 1 = 3$ and $2 (d + 1) = 6$.
In this example, mass lumping would allow \numrange{2.6}{2.8} times larger time steps, which is similar to \cref{ex:ZhuDu} (a factor of \numrange{1.9}{3.2} there).

In a practical computation, however, one would rather use a numerical approximation for $\lambda_{\max}(\MA)$. 
Typically, five steps of the Lánczos method with a random starting vector approximate the largest eigenvalue within 10\%.
Another practical alternative is the power method, for which it is reported~\cite[Sect.~3.2]{SomShaVer98a} that, for the case of eigenvalues being close to the negative real axis, usually only a few iterations are required if the computed eigenvector from the previous step is used as a new starting vector.
To compare it with our theoretical estimate, we additionally computed $\tau_h$ using five steps of the Lánczos method with a random starting vector (divided by 1.1 as a security factor since Lánczos approximation is an approximation from below).
\Cref{tab:waterflow:lanczos} shows that the corresponding ratio $\tau_{\max} / \tau_h$ is about \numrange{1.00}{1.07}, i.e., the computed time step approximation is within 7\% from the optimal value.
In our computations, the accuracy of our theoretical estimate \cref{eq:tauh:mat} corresponds to about two to three steps of the Lánczos method.

We would like to also point out that the lower bound in~\cref{eq:lmax} can be used as a practical security check for a numerical approximation: if the computed numerical approximation of $\lambda_{\max}(\MA)$ is smaller than this bound, the time step is guaranteed to be out of the stability region of the time integration method.
}\vspace{1.5ex}
\end{example}

\begin{figure}[p]
   \subcaptionbox{Domain and the diffusion $\D$\label{fig:waterflow:domain}}
   [0.33\linewidth]
   {%
      \tikzsetnextfilename{hkl2013-waterflow-omega}%
\begin{tikzpicture}[scale = 0.030]
   \tikzstyle{every node}=[font=\small]
   \path (   0,   0 ) coordinate (p01);
   \path ( 100,   0 ) coordinate (p02);
   \path ( 100,  40 ) coordinate (p03);
   \path (  20,  40 ) coordinate (p04);
   \path (  20,  44 ) coordinate (p05);
   \path ( 100,  44 ) coordinate (p06);
   \path ( 100, 100 ) coordinate (p07);
   \path (   0, 100 ) coordinate (p08);
   \path (   0,  68 ) coordinate (p09);
   \path (  80,  68 ) coordinate (p10);
   \path (  80,  64 ) coordinate (p11);
   \path (   0,  64 ) coordinate (p12);
   \draw [] (p01) -- (p02) -- (p07) -- (p08) --cycle;
   \node [below] at ( 50, 20) {$\D = \num{5.8e-2} I$};
   \draw [fill = gray!15] (p03) -- (p04) -- (p05) -- (p06) --cycle;
   \draw [fill = gray!15] (p09) -- (p10) -- (p11) -- (p12) --cycle;
   \node [above] at ( 40, 68 ) {$\D = \num{5.8e-9} I$};
   \node [above] at ( 60, 44 ) {$\D = \num{5.8e-9} I$};
   \node [above]  at (  50, 100 ) {$u(t) = 1 - e^{-t / 1000}$};
   \node [below]  at (  50,   0 ) {$u(t) = 0$};
   \node [above, rotate=+90]   at (   0,  50 ) {${\partial u}/{\partial\bm{n}} = 0$};
   \node [above, rotate=-90]  at ( 100,  50 ) {${\partial u}/{\partial\bm{n}} = 0$};
\end{tikzpicture}%
   }%
   \hfill{}%
   \subcaptionbox{$t = \num{1.0e4}$, $N = \num{5305}$\label{fig:waterflow:iv}}
   [0.33\linewidth]
   {%
      \includegraphics[width=0.30\textwidth, clip]{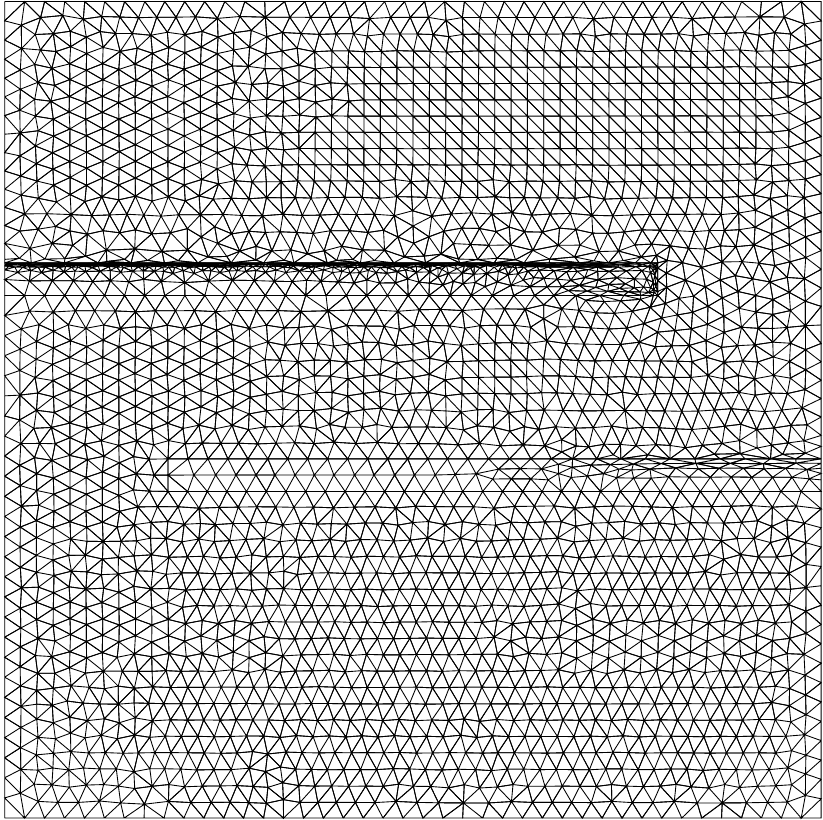}%
      \newline{}%
      \newline{}%
      \includegraphics[width=0.30\textwidth, clip]{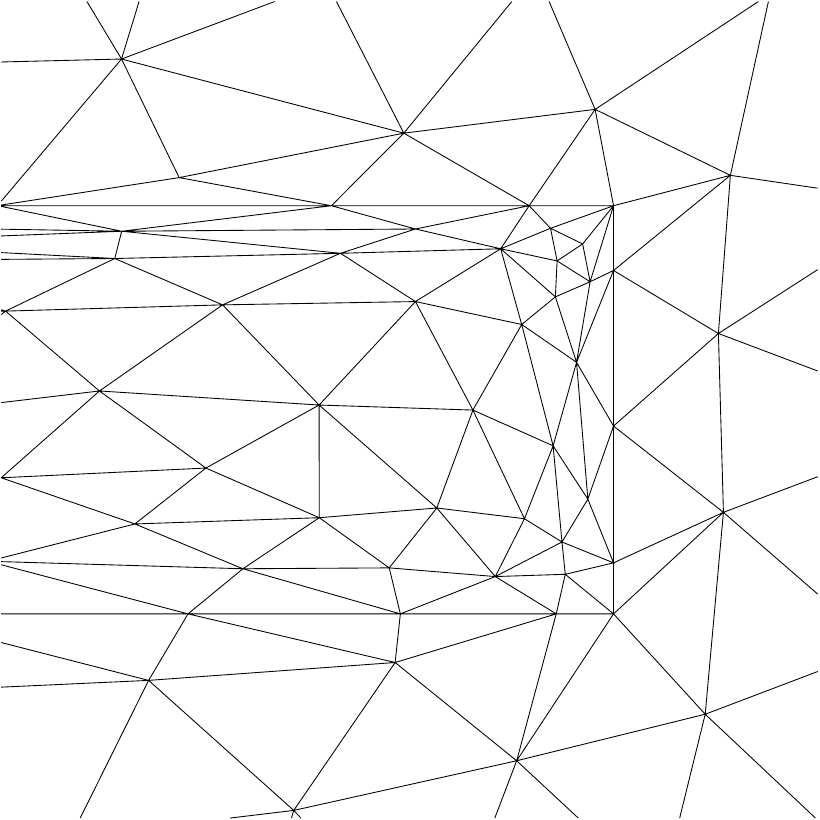}%
   }%
   \hfill{}%
   \subcaptionbox{$t = \num{1.0e5}$, $N = \num{20334}$\label{fig:waterflow:v}}
   [0.33\linewidth]
   {%
      \includegraphics[width=0.30\textwidth, clip]{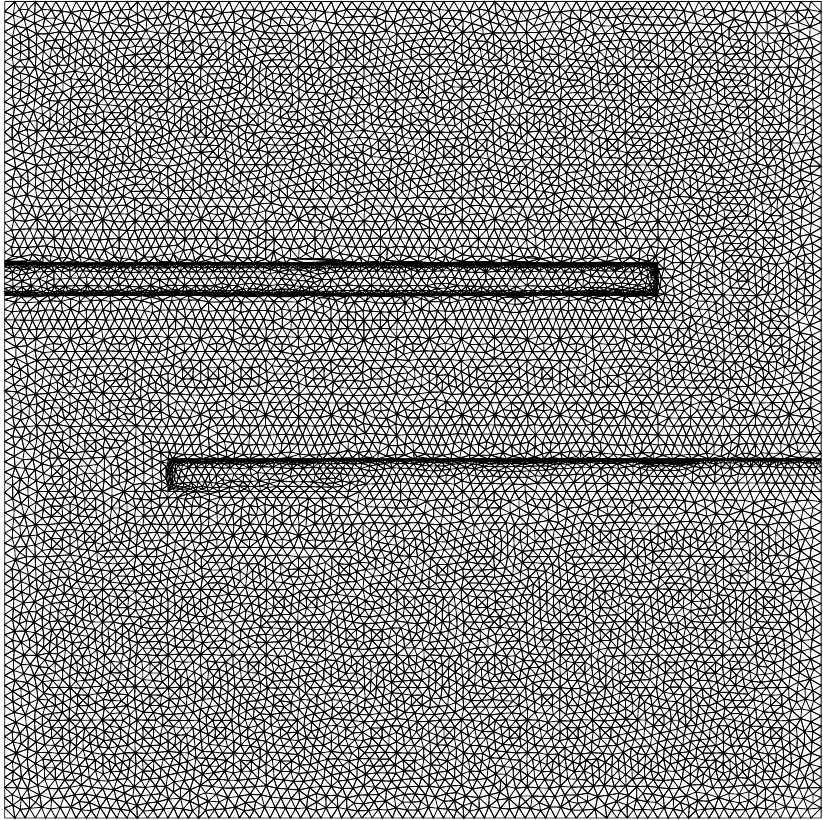}%
      \newline{}%
      \newline{}%
      \includegraphics[height=0.30\textwidth, clip]{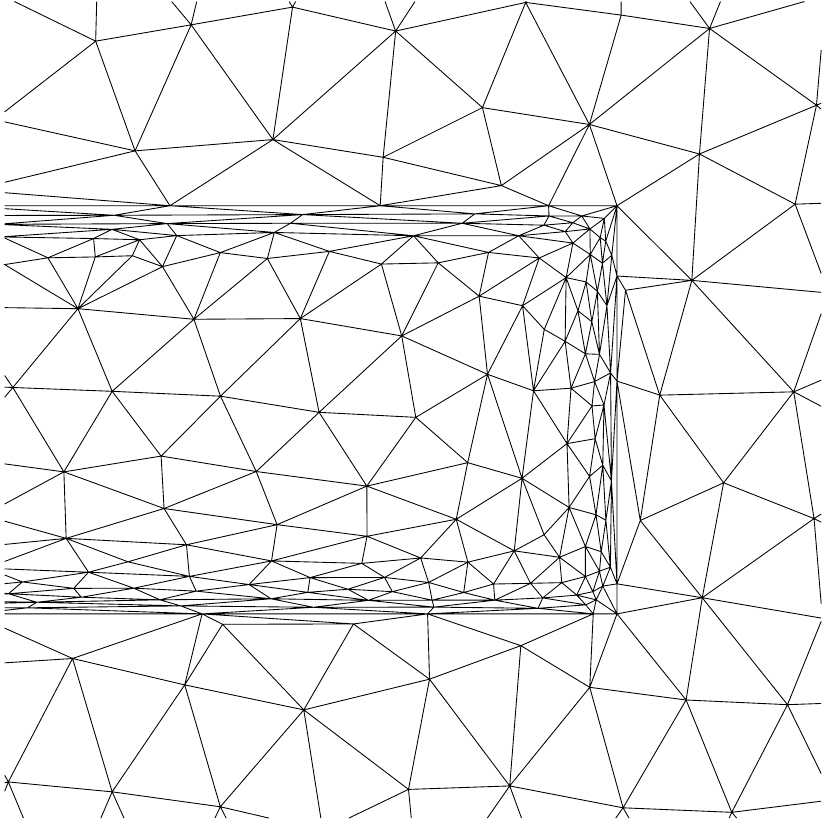}%
   }%
   \caption{%
      Domain, mesh examples and close-ups at $[74, 82] \times [62, 70]$
      (the upper right corner at the entrance of the tunnel)
      for the groundwater flow (\cref{ex:waterflow})\label{fig:waterflow}%
   }%
\end{figure}
\begin{table}[p]
   \caption{Numerical results
      for the groundwater flow (\cref{ex:waterflow})}
   \begin{subtable}[c]{1.0\linewidth}
      \centering
      \subcaption{%
         computing $\tau_h$ with \cref{eq:tauh:mat}\label{tab:waterflow}%
      }
         \begin{tabular}[b]{@{} lr llr llr @{}}
            \toprule%
            && \multicolumn{3}{c}{with mass lumping}
            & \multicolumn{3}{c}{without mass lumping}\\
            \cmidrule(r){3-5}
            \cmidrule(r){6-8}
            \multicolumn{1}{@{}l}{\phantom{1}time}
            & $N\phantom{12}$
            & $\frac{\tau_{\max}}{s^2}$
            & $\frac{\tau_h}     {s^2}$
            & $\frac{\tau_{\max}}{\tau_h}$
            & $\frac{\tau_{\max}}{s^2}$
            & $\frac{\tau_h}     {s^2}$
            & $\frac{\tau_{\max}}{\tau_h}$ \\
            \midrule%
            \input{hkl2013-waterflow.dat}
            \bottomrule%
      \end{tabular}
   \end{subtable}%
   \\[2ex]
   \begin{subtable}[c]{1.0\linewidth}
      \centering
      \subcaption{%
         computing $\tau_h$ with five steps of the Lánczos method
         using a random starting vector\label{tab:waterflow:lanczos}%
         }
         \begin{tabular}[b]{@{} lr llr llr @{}}
            \toprule%
            && \multicolumn{3}{c}{with mass lumping}
            & \multicolumn{3}{c}{without mass lumping}\\
            \cmidrule(r){3-5}
            \cmidrule(r){6-8}
            \multicolumn{1}{@{}l}{\phantom{1}time}
            & $N\phantom{12}$
            & $\frac{\tau_{\max}}{s^2}$
            & $\frac{\tau_h}     {s^2}$
            & $\frac{\tau_{\max}}{\tau_h}$
            & $\frac{\tau_{\max}}{s^2}$
            & $\frac{\tau_h}     {s^2}$
            & $\frac{\tau_{\max}}{\tau_h}$ \\
            \midrule%
            \input{hkl2013-waterflow-lanczos.dat}
            \bottomrule%
      \end{tabular}
   \end{subtable}%
\end{table}
%
\begin{example}[2D anisotropic diffusion]
\label{ex:aniso}
\normalfont{%
This example shows the importance of the interplay between the major diffusion directions and the mesh geometry.

Consider the IBVP~\cref{eq:IBVP} in $\Omega = {\left(0,1\right)}^2 \backslash {\left[\frac{4}{9},\frac{5}{9}\right]}^2$ with the homogeneous Dirichlet boundary condition and
\[
   \D =
   \begin{bmatrix}
      \cos\theta   & - \sin\theta \\
      \sin \theta  &  \cos\theta
   \end{bmatrix}
   \begin{bmatrix}
     1000   & 0 \\
     0      & 1
   \end{bmatrix}
   \begin{bmatrix}
      \cos\theta     & \sin\theta \\
      - \sin\theta   & \cos\theta
   \end{bmatrix},
   \quad
   \theta = \pi \sin x \cos y.
\]

First, we consider quasi-uniform meshes (\cref{fig:aniso:quni}), for which elements are close to be uniform in shape and size, $F_K' \approx \Abs{K}^{\frac{1}{d}} I$ and $\norm{\FDF}_2 \approx \lambda_{\max}(\D) \norm{\FF}_2$.
Hence, using \cref{eq:stability:geo-2,eq:lmax:ZhuDu} provides comparable results, which is confirmed by the numerical results in \cref{tab:aniso}: for quasi-uniform grids \cref{eq:stability:geo-2} and \cref{eq:lmax:ZhuDu} or \cref{eq:lmax:Shewchuk} are accurate within a factor of \numrange{4.04}{6.35} and \numrange{4.52}{6.02}, respectively.

For  $\D^{-1}$-uniform (coefficient-adaptive) meshes  (\cref{fig:aniso:duni}) the situation is quite different and, as mentioned in \cref{rem:zhudu}, bound \cref{eq:stability:geo-2} should be more accurate than that using \cref{eq:lmax:ZhuDu}.
This is indeed confirmed by the numerical results: bound \cref{eq:stability:geo-2} is accurate within a factor of \numrange{3.40}{6.44}, whereas \cref{eq:lmax:ZhuDu} underestimates the real value by a factor of \numrange{347}{1020} (recalling $\kappa(\D) = \num{1000}$).
Note that Shewchuk's bound \cref{eq:lmax:Shewchuk} provides accurate results in any case, although not quite as accurate as \cref{eq:stability:geo-2}.
It is worth pointing out that the most accurate bound in all cases is \cref{eq:tauh:mat} in terms of the matrix entries.

This example also shows that $\D^{-1}$-uniform meshes allow larger time steps even if their elements may have ``bad quality'' in the common sense.
Hence, it is important to consider the quality of the mesh \emph{in relation to the diffusion} and not on itself.
}%
\end{example}

\begin{figure}[p]
   \subcaptionbox{quasi-uniform\label{fig:aniso:quni}}
   [0.5\linewidth]
   {
      \tikzsetnextfilename{hkl2013-aniso-uni-mesh}
      \begin{tikzpicture}[scale=0.9]
         \begin{axis}[axis equal, hide axis,
               xmin = 0.0, xmax = 1.0, ymin = 0.0, ymax = 1.0,
               colormap={bw}{gray(0cm)=(0); gray(1cm)=(0)},
               height=0.40\linewidth,
            ]
            \addplot[patch, fill=white] table {hkl2013-aniso-uni-mesh.dat};
         \end{axis}
      \end{tikzpicture}
   }%
   \subcaptionbox{$\D^{-1}$-uniform\label{fig:aniso:duni}}
   [0.5\linewidth]
   {
      \tikzsetnextfilename{hkl2013-aniso-duni-mesh}
     \begin{tikzpicture}[scale=0.9]
         \begin{axis}[axis equal, hide axis,
               xmin = 0.0, xmax = 1.0, ymin = 0.0, ymax = 1.0,
               colormap={bw}{gray(0cm)=(0); gray(1cm)=(0)},
               height=0.40\linewidth,
         ]
            \addplot[patch, fill=white] table {hkl2013-aniso-duni-mesh.dat};
         \end{axis}
      \end{tikzpicture}
   }%
   \caption{Mesh examples for the anisotropic diffusion
      (\cref{ex:aniso})\label{fig:aniso}}
   \vspace{3ex}
   \captionof{table}{Numerical results for the anisotropic diffusion
      (\cref{ex:aniso})\label{tab:aniso}}
   \begin{subtable}[c]{1.0\linewidth}
   \centering
   \subcaption{%
      without mass lumping
   }%
      \begin{tabular}[b]{@{} rl ll ll lr @{}}
      \toprule%
       \multicolumn{2}{c}{ }
       & \multicolumn{2}{c}{new estimate \cref{eq:tauh:mat}}
       & \multicolumn{2}{c}{geometric \cref{eq:tauh:geo}}
       & \multicolumn{2}{c}{Zhu \& Du~\cite{ZhuDu14}}\\
      \cmidrule(r){3-4}
      \cmidrule(r){5-6}
      \cmidrule(r){7-8}
        $N\phantom{12}$
      & \phantom{12}$\frac{\tau_{\max}}{s^2}$
      & \phantom{12}$\frac{\tau_h}{s^2}$
      & $\frac{\tau_{\max}}{\tau_h}$
      & \phantom{12}$\frac{\tau_h}{s^2}$
      & $\frac{\tau_{\max}}{\tau_h}$
      & \phantom{12}$\frac{\tau_h}{s^2}$
      & $\frac{\tau_{\max}}{\tau_h}$ \\
      \midrule%
      \multicolumn{8}{c}{quasi-uniform meshes (\cref{fig:aniso:quni})} \\
      \midrule%
      \input{hkl2013-aniso-uni.dat}
      \midrule%
      \multicolumn{8}{c}{$\D^{-1}$-uniform meshes (\cref{fig:aniso:duni})} \\
      \midrule%
      \input{hkl2013-aniso-duni.dat}
      \bottomrule%
      \end{tabular}%
   \end{subtable}%
   \\[1ex]
   \begin{subtable}[c]{1.0\linewidth}
   \centering
   \subcaption{%
      with mass lumping
   }%
      \begin{tabular}[b]{@{} rl ll ll lr @{}}
      \toprule%
        \multicolumn{2}{c}{ }
       & \multicolumn{2}{c}{new estimate~\cref{eq:tauh:mat}}
        & \multicolumn{2}{c}{geometric~\cref{eq:tauh:geo}}
        & \multicolumn{2}{c}{Shewchuk~\cite{She02a}}\\
      \cmidrule(r){3-4}
      \cmidrule(r){5-6}
      \cmidrule(r){7-8}
        $N\phantom{12}$
      & \phantom{12}$\frac{\tau_{\max}}{s^2}$
      & \phantom{12}$\frac{\tau_h}{s^2}$
      & $\frac{\tau_{\max}}{\tau_h}$
      & \phantom{12}$\frac{\tau_h}{s^2}$
      & $\frac{\tau_{\max}}{\tau_h}$
      & \phantom{12}$\frac{\tau_h}{s^2}$
      & $\frac{\tau_{\max}}{\tau_h}$ \\
      \midrule%
      \multicolumn{8}{c}{quasi-uniform meshes (\cref{fig:aniso:quni})} \\
      \midrule%
      \input{hkl2013-aniso-uni-lump.dat}
      \midrule%
      \multicolumn{8}{c}{$\D^{-1}$-uniform meshes (\cref{fig:aniso:duni})} \\
      \midrule%
      \input{hkl2013-aniso-duni-lump.dat}
      \bottomrule%
   \end{tabular}%
   \end{subtable}%
\end{figure}

\newpage
\section{Conclusions}
\label{sec:conclusion}

\Cref{thm:lmax:theorem} gives an easily computable bound on the largest eigenvalue of the system matrix $\tM^{-1}A$ in terms of the diagonal entries of $\tM$ and $A$ with $\tM$ being either $M$ or $M_{lump}$.
The bound is tight for \emph{any mesh} and \emph{any diffusion matrix $\D$} within a small constant which is given explicitly and depends only on the dimension of the domain.
This allows efficient and accurate estimation of the largest permissible time step $\tau_{\max}$.

Moreover, estimates \cref{eq:stability:geo,eq:stability:geo:ii} in terms of the mesh geometry reveals how the mesh and the diffusion matrix affect the stability condition.
Roughly speaking, $\tau_{\max}$ depends only on the number of mesh elements and the matching between the element geometry with the diffusion matrix.
Thus, it is not the element geometry itself but the \emph{element geometry in relation to the diffusion matrix} that is important for the stability.
The element quality measure $\QD$ provides a measure for the effect of a given element on the stability condition.
As seen in \cref{ex:ZhuDu}, strong anisotropic adaptation in the ``wrong'' direction can cause a significant reduction of the time step size.
Meanwhile, the result suggests that improvements in the element quality can significantly increase $\tau_{\max}$.

The achieved result can be extended for high order~\cite{HuaKamLan15} or even $p$-adaptive finite elements without major modifications.
Essentially, one only needs to recalculate the constants which depend on the choice of the basis functions.

Furthermore, numerical results suggest that, at least in one and two dimensions, mass lumping can increase the time step size by a factor of \numrange{2}{3}.
This topic deserves more detailed investigations.

\section*{Acknowledgement}
Lennard Kamenski is thankful to Klaus Gärtner for a helpful comment that lead to \cref{rem:Duniform} and to Larissa Kaspar for providing parts of the code used in computations in \cref{ex:waterflow}.

The authors are grateful to an anonymous referee and particularly to Jed Brown for their valuable comments and suggestions which helped to improve the quality of this paper.


\end{document}